\newcommand{\vecA}{\mathbf{A}}
\newcommand{\vecC}{\mathbf{C}}
\newcommand{\vecD}{\mathbf{D}}
\newcommand{\vecF}{\mathbf{F}}
\newcommand{\vecG}{\mathbf{G}}
\newcommand{\vecH}{\mathbf{H}}
\newcommand{\vecM}{\mathbf{M}}
\newcommand{\vecN}{\mathbf{N}}
\newcommand{\vecU}{\mathbf{U}}
\newcommand{\vecX}{\mathbf{X}}
\newcommand{\vecY}{\mathbf{Y}}
\newcommand{\vecd}{\mathbf{d}}
\newcommand{\vece}{\mathbf{e}}
\newcommand{\vecg}{\mathbf{g}}
\newcommand{\vecu}{\mathbf{u}}
\newcommand{\vecv}{\mathbf{v}}
\newcommand{\vecw}{\mathbf{w}}
\newcommand{\vecx}{\mathbf{x}}
\newcommand{\vecy}{\mathbf{y}}
\newcommand{\vecz}{\mathbf{z}}
\newcommand{\vecOmega}{\boldsymbol{\Omega}}
\newcommand {\Div}  {\mbox{\rm div\,}}
\newtheorem{theorem}{Theorem}[section]
\newtheorem{definition}[theorem]{Definition}
\newtheorem{remark}{Remark}
\begin{document}
%\mainmatter

%------
% Insert the title of your paper and (if necessary)
% a short title for the running head.
%------
\title[Controllability and kinetic limit of spherical particles]{Controllability and kinetic limit of spherical particles immersed in a viscous fluid}
\maketitle
%\titlemark{Controllability and kinetic limit of spherical particles}

%------
% Insert full names of the authors.
% Add further authors as follows:
%  \emsauthor{2}{}{}
%  \emsauthor{3}{}{}
% etc.
% Abbreviate first names for the running head.
%------
\author{M.~Zoppello\footnote{Politecnico di Torino, Corso Duca degli Abruzzi, 24, 10129 Torino \email{marta.zoppello@polito.it}} \quad H.~Shum\footnote{{Department of Applied Mathematics, University of Waterloo, Waterloo, Ontario,  Canada N2L 3G1 \email{henry.shum@uwaterloo.ca}}} and M.~Morandotti\footnote{{Politecnico di Torino, Corso Duca degli Abruzzi, 24, 10129 Torino \email{marco.morandotti@polito.it}}
}}
%\author{1}{Marta Zoppello}{M.~Zoppello}
%\author{2}{Henry Shum}{H.~Shum}
%\author{3}{Marco Morandotti}{M.~Morandotti}
%------
% Use \authormark if the list of authors is too
% long for the running head: \authormark{A.~Doe et al.}
%------

%------
% Add one \emsaffil and one \email for each author.
% NOTE: The address does NOT appear in the paper.
% It will probably be printed in an appendix.
%------
%\address{1}{Politecnico di Torino, Corso Duca degli Abruzzi, 24, 10129 Torino \email{marta.zoppello@polito.it}}
%\address{2}{Department of Applied Mathematics, University of Waterloo, Waterloo, Ontario,  Canada N2L 3G1 \email{henry.shum@uwaterloo.ca}}
%\address{3}{Politecnico di Torino, Corso Duca degli Abruzzi, 24, 10129 Torino \email{marco.morandotti@polito.it}}
%------
% Add MSC 2020 codes according to www.ams.org/msc/msc2020.html.
% Secondary codes (in square brackets) are optional.
%------
%\classification[70Q05, 93B05, 35Q20]{93C35}

%------
% Add a list of keywords.
%------
%\keywords{Controllability, geometric control theory, minimal micro-swimmers, kinetic limit, Boltzmann equation}

%------
% Optional: dedication
%------
%\chapterdedication{Dedicated to ...}

%------
% Insert your abstract.
%------
\begin{abstract}
This paper deals with systems of spherical particles immersed in a viscous fluid. Two aspects are studied, namely the controllability of such systems, with particular attention to the case of one active particle and either one or two passive ones, and the kinetic limit of such systems as the number of particles diverges. The former issue is tackled in the framework of geometric control theory, whereas the latter resorts to Boltzmann-type formulations of the system of interacting particles.
\end{abstract}

%\makecontribtitle

\allowdisplaybreaks

%------
% INSERT THE BODY OF THE PAPER HERE (except
% acknowledgments, funding info and bibliography)
%------
\section{Introduction}
In recent years, there has been a growing interest in 
manipulating and controlling the motion of particles in
the low Reynolds number regime. Indeed, understanding
what is physically possible and how to effectively achieve the desired results is important both for overcoming chaotic effects (see \cite{Hocking,JTWG}) and in the
development of micromachines for specific tasks \cite{BFT,BT}. 
Some mechanisms for transporting large collections of particles in microfluidic devices include using pressure-driven fluid flow through channels, harnessing electrokinetic effects, acoustic streaming~\cite{chakraborty_microfluidic_2010, wu_acoustofluidic_2019}, applying magnetic fields to magnetic particles, and operating optical tweezers~\cite{ghadiri_microassembly_2012, bradac_nanoscale_2018}. Of these, magnetic fields and optical tweezers are particularly useful for precise manipulation of individual particles.

In this paper, we study the problem of controlling the motion of passive microparticles that are advected with the fluid flow generated by actively controlled ones. Inertialess hydrodynamics is notorious for its time-reversibility constraint, which leads to the well-known ``Scallop Theorem'' \cite{Purcell}.
As proved in \cite{ZMG,AZN}, one way to overcome the constraint and achieve net displacements of active particles undergoing periodic, reciprocal motion is to couple two or more such particles and exploit their hydrodynamic interaction. Inspired by the ideas in \cite{Walker}, and using the geometric control tools already exploited in \cite{MMSZ}, in the first part of the paper we prove total controllability (in the framework of geometric control theory \cite{Agrachev,Coron}) of systems consisting of one active and either one or two passive particles (see also \cite{SZAM2024}). That is, such particles can be moved from arbitrary initial positions to arbitrary final positions in unbounded three-dimensional space, provided that the particles are sufficiently far apart from each other so that the far-field approximation is valid.

The problem of controlling a large number of particles immersed in a viscous fluid has a strong connection with the studies on collective motion of particles or agents. Here, the main goal is to explore how simple binary interaction forces can potentially lead to the emergence of a global behavior in the form of specific patterns \cite{CPT}. In many non-controlled systems, the formation of a specific pattern is conditional on the initial data, thus it is interesting to investigate whether an external controller can intervene on the system towards pattern formation, also in those situations where this phenomenon is not the result of autonomous self-organization.

In this paper, we suppose that the controller is able to directly assign the values of the coordinates of some of the particles, regarded as active particles. For example, the controlled ones could be moved by optical tweezers or externally imposed magnetic forces~\cite{khalil_wireless_2012} in such a way that their velocities can be considered as control functions.
This would allow us to use a minimal controlling apparatus to manipulate passive particles. In the second part of the paper we study the situation when the overall number of particles goes to infinity, preserving the ratio of controlled to passive ones. This is the case when the number of controls should be a fraction of the total number of particles in order to be effective. We develop a Boltzmann-type control approach following the ideas recently presented in \cite{AlbiFornasier16,AFK}. This yields an approximation of the mean-field dynamics by means of an iterative sampling of two-agent microscopic dynamics (binary dynamics). This same principle allows us to generate control signals for the mean-field model by means of solving optimal control problems associated to the binary dynamics.

\section{Preliminaries}
Let us consider a driftless affine control system, for $\vecz\in\mathbb{R}^n$, of the form
\begin{equation}\label{001}
\dot \vecz=\sum_{\ell=1}^m\vecg^\ell(\vecz)u_\ell
\end{equation}
where $m<n$ and, for $\ell=1,\ldots,m$, the vector fields $\vecg^\ell$ belong to $C^\infty(\mathbb{R}^n,\mathbb{R}^n)$ and $t\mapsto u_\ell(t)$ are real-valued measurable functions. We will denote by $\vecu=(u_1,\ldots,u_m)$ the collection of the controls, simply referred to as \emph{the control}.
\begin{definition}[{\cite{Coron}}]\label{def_full_contr}
Let $\Omega$ an open set of $\mathbb{R}^n$. The driftless affine control system \eqref{001} is said to be \textit{controllable} in $\Omega$ if for any initial $\vecz^\circ\in\Omega$ and final $\vecz^f\in\Omega$ there exist a time $T>0$ and a measurable bounded control $\vecu\colon [0,T]\to\mathbb{R}^m$ such that $\vecz(0)=\vecz^\circ$ and $\vecz(T)=\vecz^f$, where $\vecz \colon [0,T]\to\Omega$ is the unique solution to \eqref{001} with control $\vecu$.
\end{definition}
We now recall the well-known Chow-Rashevskii Theorem, which gives sufficient conditions for the controllability of system \eqref{001}.
\begin{theorem}[{Chow-Rashewskii, \cite[Theorem~5.9]{Agrachev}}]
\label{Chow}
Let $\Omega\subseteq \mathbb{R}^n$ be an open connected set.
Assume that the vector fields $\vecg^1,\ldots,\vecg^m\in C^\infty(\Omega,\mathbb{R}^n)$ generate a Lie algebra $\mathfrak{Lie}\{\vecg^1,\ldots,\vecg^m\}$ that satisfies the \emph{Lie algebra rank condition}
\begin{equation}\label{LARC}
\dim\big(\mathfrak{Lie}\{\vecg^1(\vecz),\ldots,\vecg^m(\vecz)\}\big)=n\qquad\text{for all } \vecz\in\Omega.
\end{equation}
Then system \eqref{001} is controllable in $\Omega$.
\end{theorem}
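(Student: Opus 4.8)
The plan is to establish controllability in two stages: a local statement, that the set of points attainable from any $\vecz^\circ\in\Omega$ contains a full neighborhood of $\vecz^\circ$, followed by a global upgrade via a connectedness argument. The structural feature that makes this work is that system \eqref{001} is driftless and each control $u_\ell$ may be given either sign, so every vector field $\vecg^\ell$ can be followed both forward and backward in time. Concatenating piecewise-constant controls then shows that the attainable set from $\vecz^\circ$ coincides exactly with the orbit of $\vecz^\circ$ under the group of diffeomorphisms generated by the flows $e^{t\vecg^1},\ldots,e^{t\vecg^m}$. In particular, the relation ``$\vecz^f$ is attainable from $\vecz^\circ$'' is reflexive, symmetric, and transitive, hence an equivalence relation on $\Omega$.

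First I would carry out the local step at a fixed $\vecz^\circ$. By the Lie algebra rank condition \eqref{LARC} one can select iterated brackets $\vecY_1,\ldots,\vecY_n$ of the generating fields whose values $\vecY_1(\vecz^\circ),\ldots,\vecY_n(\vecz^\circ)$ form a basis of $\mathbb{R}^n$. The analytic device linking brackets to trajectories is the group-commutator expansion: for two smooth fields $\vecX,\vecY$,
\[
e^{-s\vecY}\circ e^{-s\vecX}\circ e^{s\vecY}\circ e^{s\vecX}(\vecz)=\vecz+s^2[\vecX,\vecY](\vecz)+o(s^2),
\]
and suitable iteration (with passage to a limit) realizes the flow of any iterated bracket as a limit of compositions of the admissible flows $e^{\pm t\vecg^\ell}$. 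I would then consider the end-point map
\[
\Phi(t_1,\ldots,t_n)=e^{t_n\vecY_n}\circ\cdots\circ e^{t_1\vecY_1}(\vecz^\circ),
\]
whose Jacobian at the origin is the invertible matrix $\big(\vecY_1(\vecz^\circ)\,|\,\cdots\,|\,\vecY_n(\vecz^\circ)\big)$. By the inverse function theorem $\Phi$ is a local diffeomorphism onto a neighborhood of $\vecz^\circ$; since each flow $e^{t_i\vecY_i}$ is attainable (in the limit) by admissible motions, the attainable set from $\vecz^\circ$ contains an open neighborhood of $\vecz^\circ$.

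The global step is then immediate. The local result shows that each equivalence class of the reachability relation is open in $\Omega$. Distinct classes are disjoint and their union is $\Omega$, so they form an open partition; since $\Omega$ is connected, there is a single class, which must be all of $\Omega$. Therefore every $\vecz^f\in\Omega$ is attainable from every $\vecz^\circ\in\Omega$, which is precisely the controllability in the sense of Definition \ref{def_full_contr}.

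I expect the main obstacle to be the local step, namely the bridge from the purely algebraic condition \eqref{LARC} to a geometric statement about attainable directions. The Lie bracket is an infinitesimal, second-order object, whereas the attainable set records actual trajectory endpoints, and making the commutator expansion yield the bracket directions exactly, rather than only to leading order, requires care. In the smooth (non-analytic) category this is cleanest through the Orbit Theorem of Sussmann and Nagano, which guarantees that each orbit is an immersed submanifold whose tangent space at every point contains the evaluated Lie algebra; once this is in hand, \eqref{LARC} forces each orbit to be open and the connectedness argument closes the proof. Establishing this tangent-space inclusion, and thereby justifying that the attainable set is genuinely open, is where I would concentrate the effort.
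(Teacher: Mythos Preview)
The paper does not prove this statement: Theorem~\ref{Chow} is quoted as a classical result from \cite[Theorem~5.9]{Agrachev} and is used as a black box in the proofs of Theorems~\ref{221} and~\ref{344}. There is therefore no ``paper's own proof'' to compare against.

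Your sketch is the standard two-step argument (local reachability via the Lie algebra rank condition, then a connectedness upgrade) and is broadly sound. One point deserves tightening: you write that ``each flow $e^{t_i\vecY_i}$ is attainable (in the limit) by admissible motions'' and then invoke the inverse function theorem on $\Phi(t_1,\ldots,t_n)=e^{t_n\vecY_n}\circ\cdots\circ e^{t_1\vecY_1}(\vecz^\circ)$. But the bracket flows $e^{t\vecY_i}$ are only \emph{approximated} by admissible concatenations, not realized exactly, so the image of $\Phi$ is not literally contained in the attainable set. The usual remedies are either (i) to work directly with a map built from genuine admissible flows, e.g.\ $\Psi(s_1,\ldots,s_k)=e^{s_k\vecg^{\ell_k}}\circ\cdots\circ e^{s_1\vecg^{\ell_1}}(\vecz^\circ)$ for a suitable word, and argue inductively that one can arrange its differential to have full rank, or (ii) to invoke the Sussmann Orbit Theorem, as you yourself suggest at the end, which shows each orbit is an immersed submanifold with tangent space containing $\mathfrak{Lie}\{\vecg^1,\ldots,\vecg^m\}(\vecz)$, hence open under~\eqref{LARC}. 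Either route closes the gap; as written, the passage from ``brackets are approximately reachable'' to ``the attainable set is open'' is the one step that is asserted rather than justified.
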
 
\section{The model}
We introduce the general setup for a system of spherical particles immersed in a viscous fluid. Of these,~$N$ are \emph{active particles}, whose velocities are directly controlled, and~$M$ are \emph{passive particles}, whose motions are determined by the interaction with the active ones.

We suppose that both active and passive particles have the same radius $a_i^A=a_j^P=a>0$. 
For every $i=1,\ldots,N$, we denote by $%t\mapsto
\vecx_i(t)$ the position in space at time $t$ of the $i\textsuperscript{th}$ active particle and by $%t\mapsto
\dot{\vecx}_i(t)$ its velocity at time $t$.
Analogously, for every $j=1,\ldots,M$, we denote by $%t\mapsto
\vecy_j(t)$ and $%t\mapsto
\dot{\vecy}_j(t)$ the position and velocity, respectively, at time $t$ of the $j$\textsuperscript{th} passive particle. 

%For ease of comprehension, w
We adopt the convention that indices $i, i'$ are associated with active particles and indices $j, j'$ are associated with passive particles. Since active and passive particles are physically identical apart from 
%the imposition of driving forces
imposing controls on the active %particles, 
ones, many expressions for individual particles and pair-wise interactions take the same form for both classes of particles.
%both for active and passive counterparts. 
In this work, we use superscripts $\star,\bullet\in\{A,P\}$ for quantities that apply equally to active and passive particles, and we use $\alpha$ and $\beta$ for the indices of particles from the corresponding population of either active or passive particles. It is assumed that $\alpha,\beta = 1,\ldots,N$ for indices of active particles and $\alpha,\beta = 1,\ldots,M$ for indices of passive particles.

Assuming that the Reynolds number is small enough that inertial effects may be neglected, the fluid flow $\vecv$ is governed by the equations of incompressible Stokes flow and is assumed to vanish at infinity and to satisfy no-slip boundary conditions on the surfaces of the particles, namely,
\begin{equation*}
    \vecv(\vecz) = \begin{cases}
        \vecU_i^A + \vecOmega_i^A\times (\vecz - \vecx_i), & \text{for } \lvert\vecz-\vecx_i\rvert =a,\; i=1,\ldots,N,\\
        \vecU_j^P + \vecOmega_j^P\times (\vecz - \vecy_j), & \text{for } \lvert\vecz-\vecy_j\rvert =a,\; j= 1,\ldots,M,
    \end{cases} 
\end{equation*}
where $\vecU_\alpha^\star$ and $\vecOmega_\alpha^\star$ are the translational and rotational velocities, respectively, of particle $\alpha$ from population $\star \in \{A,P\}$. 

For small, neutrally buoyant particles, their inertia may also be neglected and the total forces and torques (due to the combination of hydrodynamics and external effects) are zero for each particle.
We denote by $%t\mapsto
\vecF_i^A(t)$ the force that the $i$\textsuperscript{th} active particle exerts, at time $t$, on the surrounding fluid due to an external effect that we impose to steer its motion, and we assume that all passive particles are force-free. All particles, whether active or passive, are assumed to be torque-free.

By the linearity of the Stokes equations, the relationship between active forces and the velocities of the particles, in the absence of background flows, is generically described by~\cite{Kim1991}
\begin{equation}\label{eq:mobilityequation}
%    \begin{pmatrix}
%        \vecU_\alpha^\star \\ \vecOmega_\alpha^\star
%    \end{pmatrix} = \sum_{i=1}^N
%    \begin{pmatrix}
%        \vecM_{i \alpha}^{A\star} \\ \vecN_{i \alpha}^{A\star}
%    \end{pmatrix} \vecF_i^A,
 \begin{pmatrix}
        \vecU_\alpha^\star \\ \vecOmega_\alpha^\star
    \end{pmatrix} = \sum_{i=1}^N
    \begin{pmatrix}
        \vecM_{\alpha i}^{\star A} \\ \vecN_{\alpha i}^{\star A}
    \end{pmatrix} \vecF_i^A,
\end{equation}
for $\star\in\{A,P\}$ and  $\alpha=1,\ldots,N$ if $\star=A$ and $\alpha=1,\ldots,M$ if $\star=P$. 
The %quantity
$3\times3$ matrix $\vecM_{\alpha i}^{\star A}$ is the mobility tensor for the translational velocity of particle $\alpha$ of type $\star$ due to the force on active particle $i$. Similarly, the $3\times3$ matrix $\vecN_{\alpha i}^{\star A}$ is the mobility tensor for the rotational velocity of particle $\alpha$ of type $\star$ due to the force on active particle $i$. 
In general, the mobility tensors depend on the relative positions of all particles and it is not possible to obtain a closed-form expression for them. 
By symmetry of the spherical particles, the mobility tensors are independent of the orientations of the active and passive particles. We focus on the problem of controlling the particle positions without regard to their orientations, hence, the rotational velocities do not need to be considered.

%Let $\vecd_{i'i}^{AA}\coloneqq \vecx_{i}-\vecx_{i'}$ be the displacement of the $i$\textsuperscript{th} active particle from the $i'$\textsuperscript{th} active particle, let $\vecd_{ij}^{AP}\coloneqq \vecy_j - \vecx_i$ be the displacement of the $j$\textsuperscript{th} passive particle from the $i$\textsuperscript{th} active particle, and let $\vecd_{j'j}^{PP}$ be the displacement of the $j'$\textsuperscript{th} passive particle from the $j$\textsuperscript{th} passive particle. 
Let $\vecd_{ij}^{AP}\coloneqq \vecy_j-\vecx_i$ be the displacement of the $j$\textsuperscript{th} passive particle from the $i$\textsuperscript{th} active particle and let $\vecd_{i'i}^{AA}, \vecd_{ji}^{PA}$, and $\vecd_{j'j}^{PP}$ be similarly defined for the displacements between pairs of other types of particles.
Assuming that all of the pairwise distances $r_{\alpha\beta}^{\star\bullet}\coloneqq \lvert\vecd_{\alpha\beta}^{\star\bullet}\rvert$ are large compared with %all of 
the particle radius $a$, we use the far-field approximation for the translational mobility tensors~\cite{zuk_rotneprageryamakawa_2014, Graham2018} given by
\begin{equation}\label{MM02}
\vecM_{\alpha\beta}^{\star\bullet}=\begin{cases}
    \displaystyle \frac{1}{6\pi\mu a}\mathbb{I}, & \text{for } \star = \bullet \text{ and } \alpha = \beta,\\[2ex] 
    \displaystyle \bigg(1+\frac{a^2}{3}\nabla^2\bigg)\mathcal{G}(\vecd_{\beta,\alpha}^{\bullet\star}), & \text{otherwise,}
\end{cases}
%\\ 
%\label{MM02torque}
%\vec{N}_{\alpha\beta}^{\star\bullet}&=\begin{cases}
%    \boldsymbol{0}, & \text{for } \star = \bullet \text{ and } \alpha = \beta,\\ 
%    \displaystyle \frac{1}{2}\nabla\times \bigg(1+\frac{(\rho_\beta^\bullet)^2}{6}\nabla^2\bigg)\mathcal{G}(\vec{d}_{\alpha\beta}^{\star\bullet}), & \text{otherwise,}
%\end{cases}
\end{equation}
where $\star,\bullet\in\{A,P\}$,  $\alpha=1,\ldots,N$ if $\star=A$ and $\alpha=1,\ldots,M$ if $\star=P$, $\beta=1,\ldots,N$ if $\bullet=A$ and $\beta=1,\ldots,M$ if $\bullet=P$, and the function $\mathcal{G}\colon\mathbb{R}^3\setminus\{\mathbf{0}\}\to\mathbb{R}^{3\times3}_{\mathrm{sym}}$ defined by
\begin{equation*}%\label{MM03}
\mathcal{G}(\vecd) \coloneqq \frac{1}{8\pi \mu}\left(\frac{\mathbb{I}}{r}+\frac{\vecd\otimes \vecd}{r^3}\right)
\end{equation*}
is the Stokeslet, i.e., the Green's function for the Stokes equation %\eqref{eq:stokes} 
corresponding to a point force %$\vecF\delta(\vecd)$ 
located at the origin. Equation \eqref{MM02} is accurate up to $O\big((r_{\alpha\beta}^{\star\bullet})^{-3}\big)$ and can be extended to higher orders of accuracy by the method of reflections~\cite{Kim1991}.

Note that, to this order of accuracy, the passive particles do not affect the velocities of the active particles. Identifying $\dot{\vecx}_i$ with $\vecU_i^A$ in~\eqref{eq:mobilityequation}, we rewrite the translational velocity components in the form 
$\dot{\vecX} = \vecM^{AA}\vecF$,
where $\dot{\vecX}$ and $\vecF$ are vectors of length $3N$ containing all of the translational velocities and forces, respectively, of the active particles and $\vecM^{AA}$ is a $(3N)\times(3N)$ matrix containing the blocks $\vecM_{i'i}^{AA}$. The matrix $\vecM^{AA}$ is invertible and its inverse $(\vecM^{AA})^{-1}$ is the resistance matrix describing the linear relationship between forces applied to the fluid and the velocities of the particles, $\vecF =(\vecM^{AA})^{-1}\dot{\vecX}$.

Assuming that active particles are far apart from each other, the cross-mobilities $\vecM_{i'i}^{AA}, i'\neq i$ may be neglected and the forces are related to the velocities of the active particles by a constant drag coefficient, 
\begin{equation*}
\vecF^A_i =(\vecM^{AA})_{ii}^{-1}\dot{\vecx}_i=6\pi\mu a \dot{\vecx}_i.
\end{equation*}
% We define the submatrices $\vecRg_{i'i}^{AA}$ of $\vecR^{AA}$ such that $\displaystyle \vecF_i^A = \sum_{i'=1}^N\vecR_{i'i}^{AA} \dot{\vecx_{i'}}$ for $i=1,\ldots,N$.
The equations of motion for our system of active and passive particles are then
\begin{equation}\label{MM01}
\begin{cases}
\dot{\vecx}_i(t)=\vecu_i(t), & \text{for $i=1,\ldots,N$,}\\[2mm] 
\displaystyle \dot{\vecy}_j(t)=\sum_{i=1}^N %\vecM_{ij}^{AP}(t)
\vecM_{ji}^{PA}(t)\vecF_i^A(t) = \sum_{i=1}^N \vecG_{ji}(t)\vecu_{i}(t),
& \text{for $j=1,\ldots,M$,}
\end{cases}
\end{equation} 
where 
\begin{equation*}
\vecG_{ji}=\vecG(\vecd_{ij}^{AP}) =\frac{3a}{4}\left(\frac{\mathbb{I}}{|\vecd_{ij}^{AP}|}+\frac{\vecd_{ij}^{AP}\otimes\vecd_{ij}^{AP}}{|\vecd_{ij}^{AP}|^3}\right).
\end{equation*}
%\displaystyle\sum_{i'=1}^N \vecM_{i'j}^{AP}(t) \vecR^{AA}_{ii'}(t)$.
 %If the active particles are far apart from each other, then the mobility matrix $\vecM^{AA}$ is dominated by the diagonal terms and we have $\vecG_{ji}(t) \approx 6\pi\mu a_i^A \vecM_{ij}^{AP}(t)$. 
Notice that the flow field generated by active particle $i$ through the prescribed velocity $\vecu_i(t)$ is, to leading order, unaffected by the presence of the other particles.
Thus we can rewrite equation \eqref{MM01} as
\begin{equation}\label{MM01_2}
\begin{cases}
\dot\vecx_i=\vecu_i & \text{for $i=1,\ldots N$,}\\[2mm]
\displaystyle \dot{\vecy}_j=\displaystyle\frac{3a}{4}\sum_{i=1}^N\bigg(\frac{1}{|\vecd_{ij}^{AP}|}\mathbb{I}+\frac{1}{|\vecd_{ij}^{AP}|^3}\vecd_{ij}^{AP}\otimes\vecd_{ij}^{AP}\bigg)\vecu_i & \text{for $j=1,\ldots M$,}
\end{cases}
\end{equation}
or also in a more compact form as
\begin{equation}\label{quellaconlaH}
\begin{pmatrix}
%\dot\vecy_1\\
%\vdots\\
%\dot\vecy_M\\
%\dot\vecx_1\\
%\vdots\\
%\vecx_N
\dot\vecx_1\\
\vdots\\
\dot\vecx_N\\
\dot\vecy_1\\
\vdots\\
\dot\vecy_M
\end{pmatrix}=\vecH\vecu\qquad\text{where}\qquad \vecH=\begin{pmatrix}
%\vecG_{1}\\
%\vdots\\
%\vecG_{M}\\
%\mathbb{I}_{3N\times3N}
\mathbb{I}_{3N\times3N}\\
\vecH_{1}\\
\vdots\\
\vecH_{M}
\end{pmatrix}
\end{equation}
where $\vecH_{i}$, $i=1,\ldots,M$ are $3\times 3 N$ matrices, whose $3\times 3$ blocks are given by the juxtaposition of the matrices $\vecG_{ji}$ appearing in the right-hand side of the second equation in \eqref{MM01}.\\
The velocity of each passive particle is the sum of independent contributions from the active particles. It is therefore instructive to examine in detail the case of control by one active particle, which will be the focus in the first part of this work.

We conclude this section by introducing the following definition, which defines the regime in which the far-field approximation is valid. 
\begin{definition}\label{def:wellseparated}
Let $R>0$ be the minimum separation we wish to maintain between any two particles of either type. 
%between any two passive particles and between any passive from any active ones. 
We say that an instantaneous configuration of active and passive particles is \emph{well separated}, and denote this by $$(\vecx_1(t)\ldots,\vecx_N(t),\vecy_1(t),\ldots,\vecy_M(t)) \in \mathcal{S}_R^{N+M},$$ 
if the minimum distance between any two particles
%of the passive particles and between any passive and any active particles 
at time $t$ is strictly greater than $R$. We say that, for given measurable control maps $t\mapsto (\vecu_1(t),\ldots,\vecu_N(t))$, a solution, or trajectory, $(\vecx_1,\ldots,\vecx_N,\vecy_1,\ldots,\vecy_M)\in (AC([0,T];\mathbb{R}^3))^{N+M}$ of system \eqref{MM01_2} is \emph{well separated} if, for all times $t\in[0,T]$, the configuration $(\vecx_1(t),\ldots,\vecx_N(t),\vecy_1(t),\ldots,\vecy_M(t)) \in \mathcal{S}_R^{N+M}$. 
\end{definition}
We notice that the set $\mathcal{S}_{R}^{N+M}\subset \mathbb{R}^{3(N+M)}$ is open and connected.
\section{Two controllability results}
In this section, we focus on the case of one active particle ($N=1$) and prove controllability results for two different systems of point particles. The former is the case of one passive particle ($M=1$), the latter is the case of two passive ones ($M=2$); in both scenarios, we obtain full controllability according to Definition~\ref{def_full_contr}.
In the remainder of this section, we fix the well separation radius $R\gg 2a>0$.

\subsection{One active and one passive particle}\label{1on1}
Letting $N=M=1$ and dropping the indices for particles, system \eqref{MM01_2} becomes 
\begin{equation}\label{MM01_2_1}
\begin{cases}
\dot\vecx=\vecu, \\[2mm]
\displaystyle \dot{\vecy}=\displaystyle\frac{3a}{4} \bigg(\frac{1}{|\vecd|}\mathbb{I}+\frac{1}{|\vecd|^3}\vecd\otimes\vecd\bigg)\vecu,
\end{cases}
\end{equation}
where $\vecd(t)\coloneqq \vecy(t)-\vecx(t)$, and it is in the form of \eqref{001}.
\begin{theorem}\label{221}
For every $(\vecx^\circ,\vecy^\circ),(\vecx^f,\vecy^f)\in\mathcal{S}_R^{1+1}$, 
%Let us fix $\epsilon>0$. 
there exist $T>0$ and a control map $\vecu\in L^\infty([0,T];\mathbb{R}^3)$ such that system \eqref{MM01_2_1},
admits a unique solution $(\vecx,\vecy)\in AC\big([0,T];\mathcal{S}_{R}^{1+1}\big)$ (depending on $\vecu$) such that~$(\vecx(0),\vecy(0))=(\vecx^\circ,\vecy^\circ)$ and~$(\vecx(T),\vecy(T))=(\vecx^f,\vecy^f)$, namely the system is controllable according to Definition~\ref{def_full_contr}.
%, and $(\vecx(T),\vecy(T))=(\vecx^f,\vecy^f)$ \textcolor{red}{and $|\vecd(t)|>R$ for all $t\in[0,T]$. }
%In other words the system is asymptotically controllable.
\end{theorem}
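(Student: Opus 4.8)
The plan is to apply the Chow--Rashevskii Theorem (Theorem~\ref{Chow}) on the open connected set $\mathcal{S}_R^{1+1}$. Writing the state as $\vecz=(\vecx,\vecy)\in\mathbb{R}^6$, system~\eqref{MM01_2_1} is the driftless affine system $\dot\vecz=\sum_{\ell=1}^3\vecg^\ell(\vecz)u_\ell$ with the three smooth vector fields
\[
\vecg^\ell(\vecz)=\begin{pmatrix}\vece_\ell\\ \vecG(\vecd)\vece_\ell\end{pmatrix},\qquad \ell=1,2,3,
\]
where $\vecd=\vecy-\vecx$ and $\vecG$ is the matrix in~\eqref{MM01_2_1}. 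Since each $\vecg^\ell\in C^\infty(\mathcal{S}_R^{1+1},\mathbb{R}^6)$, controllability according to Definition~\ref{def_full_contr} follows once we verify the Lie algebra rank condition~\eqref{LARC}, i.e.\ $\dim\mathfrak{Lie}\{\vecg^1,\vecg^2,\vecg^3\}(\vecz)=6$ at every $\vecz\in\mathcal{S}_R^{1+1}$; note that Theorem~\ref{Chow} produces a trajectory remaining in $\mathcal{S}_R^{1+1}$, so the well separation required in the statement is automatic.

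The main structural simplification comes from passing to the coordinates $(\vecx,\vecd)$, in which $\dot\vecx=\vecu$ and $\dot\vecd=(\vecG(\vecd)-\mathbb{I})\vecu$, so the fields become $\vecg^\ell=(\vece_\ell,(\vecG-\mathbb{I})\vece_\ell)$ and depend on $\vecz$ only through $\vecd$. Because the $\vecx$-blocks $\vece_\ell$ are constant, a direct computation shows that the $\vecx$-block of every iterated Lie bracket of the $\vecg^\ell$ vanishes, so each bracket has the form $(0,\psi(\vecd))$. As the three $\vecx$-blocks $\vece_1,\vece_2,\vece_3$ are already independent, the Lie algebra has dimension $3+\dim\mathrm{span}\{\psi\}$, where $\psi$ ranges over the $\vecd$-blocks of all brackets; thus~\eqref{LARC} is equivalent to showing that these $\vecd$-blocks span $\mathbb{R}^3$.

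To compute them at a fixed configuration I would use the rotational invariance of the Stokeslet: after a rotation we may assume $\vecd=r\vece_3$, so that $\vecG=\tfrac{3a}{4r}\,\mathrm{diag}(1,1,2)$. Writing $\phi^\ell=(\vecG-\mathbb{I})\vece_\ell$ and $A^\ell=\partial_{\vecd}(\vecG\vece_\ell)$, the first-order brackets have $\vecd$-blocks $\psi_{ij}=A^j\phi^i-A^i\phi^j$; evaluating at $\vecd=r\vece_3$ gives $\psi_{12}=0$ together with $\psi_{13}=c'(3c-2)\,\vece_1$ and $\psi_{23}=c'(3c-2)\,\vece_2$, where $c=\tfrac{3a}{4r}$ and $c'=\tfrac{3a}{4r^2}$. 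Since $R\gg2a$ forces $3c-2=\tfrac{9a}{4r}-2\neq0$ for $r>R$, the first-order brackets supply the two directions $\vece_1,\vece_2$ orthogonal to $\vecd$. They do not reach the radial direction $\vece_3$, which is forced to vanish at first order by the axial symmetry of the configuration about $\vecd$; to obtain it one passes to a second-order bracket, e.g.\ $[\vecg^1,[\vecg^1,\vecg^3]]$, whose $\vecd$-block contains the term $-A^1\psi_{13}=-(c')^2(3c-2)\,\vece_3$ and is therefore nonzero along $\vece_3$ for $r>R$.

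Collecting the three original fields together with $[\vecg^1,\vecg^3]$, $[\vecg^2,\vecg^3]$ and $[\vecg^1,[\vecg^1,\vecg^3]]$ then yields six vectors spanning $\mathbb{R}^6$ at the chosen configuration, and hence, by rotational invariance, at every $\vecz\in\mathcal{S}_R^{1+1}$, so Theorem~\ref{Chow} gives the claim. I expect the main obstacle to be precisely this passage to second order: the symmetry that makes $\psi_{12}$ vanish and confines the first-order brackets to the plane orthogonal to $\vecd$ means the radial direction is genuinely a depth-two bracket, so one must carry out the second-order computation carefully and check that the remaining term $[\partial_{\vecd}\psi_{13}]\phi^1$ does not cancel the radial contribution $-(c')^2(3c-2)\,\vece_3$ throughout the well-separated regime $r>R$.
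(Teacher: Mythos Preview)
Your approach is the paper's: verify the Lie algebra rank condition via brackets up to second order and invoke Chow--Rashevskii on $\mathcal{S}_R^{1+1}$. The rotational reduction to $\vecd=r\vece_3$ is a clean simplification; the paper instead computes all brackets at a generic $\vecd$, forms three $6\times6$ determinants $\delta_k(\vecd)=\delta(\vecd)\,d_k^2$ with $\delta(\vecd)>0$, and argues that at least one $d_k\neq0$. The step you flag as unfinished does go through and there is no cancellation: at $\vecd=r\vece_3$ the term $(\partial_{\vecd}\psi_{13})\phi^1$ is also along $\vece_3$, and the two contributions combine to give the $\vecd$-block of $[\vecg^1,[\vecg^1,\vecg^3]]$ equal (up to your bracket sign convention) to
\[
\frac{3a(8r-9a)(2r-3a)}{32\,r^5}\,\vece_3,
\]
which is nonzero for $r>R\gg2a$. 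This is precisely the factor $16|\vecd|^2-42a|\vecd|+27a^2=(8r-9a)(2r-3a)$ appearing in the paper's $\delta(\vecd)$, so both arguments ultimately rest on the same polynomial not vanishing in the well-separated regime.
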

\begin{proof}
Since system \eqref{MM01_2_1} is in the form \eqref{001}, it is sufficient to prove that condition \eqref{LARC} holds (with $n=3(N+M)=6=\dim (\mathcal{S}_{R}^{1+1})$).
%The proof is based on the application of the Chow Theorem~\ref{Chow}.
%will be divided in three steps, in the first one we prove that in any point we are able to generate any direction of motion, in the second one we show that is possible to reach an arbitrary small neighborhood of the final configuration maintaining the mutual distance between the two particles bigger than $R$, and finally in the third step we show that it is possible actually to reach the final configuration.\\
%\textit{Step 1.}
%Under the current assumptions, system \eqref{MM01_2} becomes 
%\begin{equation}\label{MM01_2_1}
%\begin{cases}
%\dot\vecx=\vecu, \\[2mm]
%\displaystyle \dot{\vecy}=\displaystyle\frac{3a}{4} \bigg(\frac{1}{|\vecd|}\mathbb{I}+\frac{1}{|\vecd|^3}\vecd\otimes\vecd}\bigg)\vecu,
%\end{cases}
%\end{equation}
%where $\vecd(t)\coloneqq \vecy(t)-\vecx(t)$, and it is in the form of \eqref{001}; thus it is possible to prove that \textcolor{red}{at any point} we can generate any direction of motion through the application of the Chow Theorem~\ref{Chow} \textcolor{red}{with $\Omega=\ldots$}, namely through the computation of 
We start by computing the Lie brackets of the control vector fields $\vecg^\ell\coloneqq (\vece^\ell,\vecH_1^\ell)^\top$, for $\ell=1,2,3$. 
Here, $\vecH_{1}^\ell$ is the $\ell-$th column of the matrix $\vecH_1$ appearing in \eqref{quellaconlaH}.
More precisely, we have
$$\vecg^1=\left(1,0,0,\frac{3a}4 \bigg(\frac{1}{|\vecd|}+\frac{d_1^2}{|\vecd^3}\bigg),\frac{3a}4 \frac{d_1d_2}{|\vecd|^3},\frac{3a}4\frac{d_1d_3}{|\vecd|^3}\right)^\top,$$
$$\vecg^2=\left(0,1,0,\frac{3a}4 \frac{d_1d_2}{|\vecd|^3},\frac{3a}4 \bigg(\frac{1}{|\vecd|}+\frac{d_2^2}{|\vecd|^3}\bigg),\frac{3a}4\frac{d_2d_3}{|\vecd|^3}\right)^\top,$$
$$\vecg^3=\left(0,0,1,\frac{3a}4 \frac{d_1d_3}{|\vecd|^3},\frac{3a}4\frac{d_2d_3}{|\vecd|^3},\frac{3a}4 \bigg(\frac{1}{|\vecd|}+\frac{d_3^2}{|\vecd|^3}\bigg)\right)^\top,$$
where, for $k=1,2,3$, $d_k$ is the $k$th component of the vector $\vecd$, and we define the first-order Lie brackets
\[\begin{split}
\vecv^1 & \coloneqq[\vecg^2,\vecg^3]=\left(0,0,0, 0, \displaystyle \frac{3a(8|\vecd|-9a)d_3}{16|\vecd|^4}, \displaystyle -\frac{3a(8|\vecd|-9a)d_2}{16|\vecd|^4}\right)^\top, \\
\vecv^2 & \coloneqq[\vecg^1,\vecg^3]= \left(0,0,0, \displaystyle \frac{3a(8|\vecd|-9a)d_3}{16|\vecd|^4}, 0, \displaystyle -\frac{3a(8|\vecd|-9a)d_1}{16|\vecd|^4}\right)^\top, \\
\vecv^3 & \coloneqq[\vecg^1,\vecg^2]=\left(0,0,0,\displaystyle \frac{3a(8|\vecd|-9a)d_2}{16|\vecd|^4}, \displaystyle -\frac{3a(8|\vecd|-9a)d_1}{16|\vecd|^4},0\right)^\top,
\end{split}\] 
and the second-order Lie brackets
%\TODO{Observe that they are rotations}
$$
\vecw^1  \coloneqq [\vecg^3,\vecg^2]= \begin{pmatrix}
0\\
0\\
0\\
 \frac{3a(32(|\vecd|^2-3d_3^2)|\vecd|^2-12a(7d_1^2+5d_2^2-19d_3^2)|\vecd|+27a^2(2d_1^2+d_2^2-7d_3^2))}{64|\vecd|^7}\\[2mm]
 -\frac{9a^2(8|\vecd|-9a)d_1d_2}{64|\vecd|^7}\\[2mm]
 \frac{9a(32|\vecd|^2-104a|\vecd|+81a^2)d_1d_3}{64|\vecd|^7}
\end{pmatrix},$$
$$\vecw^2 \coloneqq[\vecg^1,\vecv^3]= \begin{pmatrix}
0\\
0\\
0\\
 -\frac{9a(32|\vecd|^2-104a|\vecd|+81a^2)d_1d_2}{64|\vecd|^7}\\[2mm]
 \frac{3a\big(32(3d_1^2-|\vecd|^2)|\vecd|^2-12a(19d_1^2-7d_2^2-5d_3^2)|\vecd|+27a^2(7d_1^2-2d_2^2-d_3^2)\big)}{64\|\vecd|^7}\\[2mm]
 \frac{9a^2(8|\vecd|-9a)d_2d_3}{64|\vecd|^7}
\end{pmatrix},$$
$$\vecw^3 \coloneqq[\vecg^1,\vecv^2]=
\begin{pmatrix}
0\\
0\\
0\\
 -\frac{9a(32|\vecd|^2-104a|\vecd|+81a^2)d_1d_3}{64|\vecd|^7} \\[2mm]
 \frac{9a^2(8|\vecd|-9a)d_2d_3}{64|\vecd|^7} \\[2mm]
 \frac{3a\big(32(3d_1^2-|\vecd|^2)|\vecd|^2-12a(19d_1^2-5d_2^2-7d_3^2)|\vecd|+27a^2(7d_1^2-d_2^2-2d_3^2)\big)}{64|\vecd|^7}
\end{pmatrix}.$$

To check linear independence of the vectors $\{\vecg_1,\vecg_2,\vecg_3\}\subset\mathbb{R}^6$ with some of the vectors $\vecv_1,\vecv_2,\vecv_3,\vecw_1,\vecw_2,\vecw_3$, we compute the following determinants: 
\[\begin{split}
\delta_1(\vecd)& \coloneqq\det\big((\vecg^1|\vecg^2|\vecg^3|\vecw^1|\vecv^2|\vecv^3)\big)=\delta(\vecd)d_1^2\,,\\
\delta_2(\vecd)& \coloneqq\det\big((\vecg^1|\vecg^2|\vecg^3|\vecv^1|\vecw^2|\vecv^3)\big)=-\delta(\vecd)d_2^2\,,\\
\delta_3(\vecd)& \coloneqq\det\big((\vecg^1|\vecg^2|\vecg^3|\vecv^1|\vecv^2|\vecw^3)\big)=-\delta(\vecd)d_3^2\,,\\
\end{split}\]
where 
$$\delta(\vecd)\coloneqq \frac{27a^3(8|\vecd|-9a)^2(16|\vecd|^2-42a|\vecd|+27a^2)}{8192|\vecd|^{13}}.$$
Notice that the well-separatedness condition $(\vecx,\vecy)\in\mathcal{S}_{R}^{1+1}$ is equivalent to asking that $|\vecd|>R$. 
Under this condition, 
%and notice that in the \textcolor{red}{regime $|\vecd|>R>>2a$}, 
the function $\delta(\vecd)$ is always positive, so that at least one of the three functions $\delta_k$ ($k=1,2,3$) above is non-zero provided that $\vecd\neq 0$: this is condition \eqref{LARC}.
Owing to the Chow--Rashewskii Theorem~\ref{Chow} (applied with $\Omega=\mathcal{S}_{R}^{1+1}$), the system is controllable according to Definition~\ref{def_full_contr} and the theorem is proved.
\end{proof}
\subsection{One active and two passive particles}
Letting $N=1$ and $M=2$, system \eqref{MM01_2} reads
\begin{equation}\label{MMH12}
\begin{cases} 
\dot{\vecx}(t)=\vecu(t),\\[2mm]
\displaystyle \dot{\vecy}_1(t)=  \frac{3a}{4}\left(\frac{1}{|\vecd_1(t)|}\mathbb{I}+\frac{1}{|\vecd_1(t)|^3}\vecd_1(t)\otimes \vecd_1(t)\right){\vecu}(t),\\[2mm]
\displaystyle \dot{\vecy}_2(t)=  \frac{3a}{4}\left(\frac{1}{|\vecd_2(t)|}\mathbb{I}+\frac{1}{|\vecd_2(t)|^3}\vecd_2(t)\otimes \vecd_2(t)\right){\vecu}(t),
\end{cases}
\end{equation}
where 
\begin{equation}\label{linear}
\vecd_j\coloneqq \vecy_j-\vecx, \qquad \text{for $j=1,2$;}
\end{equation}
subtracting the first equation from the last two, system \eqref{MMH12} can be re-written as
\begin{equation}\label{MMH12_2}
\begin{cases} 
\dot{\vecx}(t)=\vecu(t),\\[2mm]
\displaystyle \dot{\vecd}_1(t)=  \left(\left(\frac{3a}{4}\frac{1}{|\vecd_1(t)|}-1\right)\mathbb{I}+\frac{3a}{4}\frac{1}{|\vecd_1(t)|^3}\vecd_1(t)\otimes \vecd_1(t)\right){\vecu}(t),\\[2mm]
\displaystyle \dot{\vecd}_2(t)=  \left(\left(\frac{3a}{4}\frac{1}{|\vecd_2(t)|}-1\right)\mathbb{I}+\frac{3a}{4}\frac{1}{|\vecd_2(t)|^3}\vecd_2(t)\otimes \vecd_2(t)\right){\vecu}(t).
\end{cases}
\end{equation}
Notice that the last two equations in \eqref{MMH12_2} are decoupled, and have the same structure, therefore we expect that the action of the control produces similar behavior on the two passive particles.
%For \eqref{MMH12_2}, we can prove the following theorem. \textcolor{red}{MM: write appropriately}
\begin{theorem}\label{344}
For every $(\vecx^\circ,\vecy_1^\circ,\vecy_2^\circ),(\vecx^f,\vecy_1^f,\vecy_2^f)\in\mathcal{S}_{R}^{1+2}$, there exist $T>0$ and a control map $\vecu\in L^\infty([0,T];\mathbb{R}^3)$ such that system \eqref{MMH12} admits a unique solution $(\vecx,\vecy_1,\vecy_2)\in AC\big([0,T];\mathcal{S}_{R}^{1+2}\big)$ (depending on $\vecu$), such that $(\vecx(0),\vecy_1(0),\vecy_2(0))=(\vecx^\circ,\vecy_1^\circ,\vecy_2^\circ)$ and $(\vecx(T),\vecy_1(T),\vecy_2(T))=(\vecx^f,\vecy_1^f,\vecy_2^f)$, namely the system is controllable according to Definition~\ref{def_full_contr}.
%, and $|\vecd_1(t)|$, $|\vecd_2(t)|$, $|\vecd_2(t) - \vecd_1(t)| > R$ for all $t\in [0,T]$.
\end{theorem}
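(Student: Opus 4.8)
My plan is to invoke the Chow--Rashewskii Theorem~\ref{Chow} for the reformulated system~\eqref{MMH12_2}, whose state $(\vecx,\vecd_1,\vecd_2)$ ranges in the open connected set $\Omega=\mathcal{S}_R^{1+2}\subset\mathbb{R}^9$, so that $n=9$. Setting $\vecK_j\coloneqq\big(\tfrac{3a}{4}|\vecd_j|^{-1}-1\big)\mathbb{I}+\tfrac{3a}{4}|\vecd_j|^{-3}\vecd_j\otimes\vecd_j$, the control vector fields are $\vecg^\ell=(\vece^\ell,\vecK_1\vece^\ell,\vecK_2\vece^\ell)^\top$, $\ell=1,2,3$. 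The first step is to record the structural observation that organizes the whole computation: since $\vecK_1$ depends only on $\vecd_1$, $\vecK_2$ only on $\vecd_2$, and neither depends on $\vecx$, every iterated Lie bracket of $\vecg^1,\vecg^2,\vecg^3$ annihilates the $\vecx$-block and takes the form $(\mathbf 0,\vecp(\vecd_1),\vecp(\vecd_2))^\top$, where $\vecp$ is one of the single-passive-particle bracket functions already produced in the proof of Theorem~\ref{221} (the constant $-\mathbb{I}$ in $\vecK_j$ is killed by differentiation, so the two displacement blocks reproduce that computation verbatim). As $\vecg^1,\vecg^2,\vecg^3$ already span the three $\vecx$-directions, condition~\eqref{LARC} with $n=9$ reduces to showing that these brackets span the six-dimensional displacement space $\mathbb{R}^3_{\vecd_1}\times\mathbb{R}^3_{\vecd_2}$ at every well-separated configuration; equivalently, that the two-point evaluation $\vecp\mapsto(\vecp(\vecd_1),\vecp(\vecd_2))$ of the single-particle bracket family is onto $\mathbb{R}^6$ whenever $|\vecd_1|,|\vecd_2|,|\vecd_1-\vecd_2|>R$.

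Next I would test the six brackets inherited from Theorem~\ref{221}, namely the vectors $(\vecv^k(\vecd_1),\vecv^k(\vecd_2))$ and $(\vecw^k(\vecd_1),\vecw^k(\vecd_2))$, $k=1,2,3$, assembled into a $6\times6$ matrix with determinant $\Delta(\vecd_1,\vecd_2)$. Using the block structure and the explicit formulas, I expect $\Delta$ to factor through the single-particle quantities $\delta(\vecd_1),\delta(\vecd_2)$ times a factor encoding the misalignment of $\vecd_1$ and $\vecd_2$. When $\vecd_1$ and $\vecd_2$ are not parallel this misalignment factor is nonzero and~\eqref{LARC} already holds: the three first-order brackets $\vecv^k(\vecd)$ span the plane orthogonal to $\vecd$ in each block, and since these two orthogonal planes are distinct for non-parallel $\vecd_1,\vecd_2$, the stacked vectors $(\vecv^k(\vecd_1),\vecv^k(\vecd_2))$ already supply three independent directions, which the radially loaded second-order brackets $\vecw^k$ then complete to a basis.

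The main obstacle is the collinear stratum $\vecd_1\parallel\vecd_2$, which is \emph{not} ruled out by well-separation, since three collinear particles with all gaps exceeding $R$ are admissible. Aligning the common axis, one checks that $\vecv^1$ vanishes identically and that the remaining first- and second-order brackets populate only the two transverse directions together with a \emph{single} radial one (from $\vecw^1$), so the six inherited brackets span merely five dimensions. The deepest degeneracy occurs at the antipodal configuration $\vecd_2=-\vecd_1$ (the active particle equidistant between the two passive ones), and here a parity argument pinpoints what is missing: since the mobility $\vecG$ is even in $\vecd$, a bracket obtained from $k$ bracketings is odd when $k$ is odd and even when $k$ is even, so at $\vecd_2=-\vecd_1$ an even bracket contributes the \emph{same} vector in both displacement blocks while an odd one contributes opposite vectors. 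The unreachable direction is precisely the radial one appearing with opposite signs in the two blocks; it can therefore be produced only by an \emph{odd}-order bracket with a nonzero radial part, and, first-order brackets being purely transverse, it must come from a third-order bracket. I would thus adjoin one such third-order bracket, check by direct computation that its collinear radial profile is not proportional to that of $\vecw^1$, and conclude that the augmented family attains rank six on the whole collinear stratum, the relevant $2\times2$ radial determinant being nonzero precisely because $|\vecd_1-\vecd_2|>0$.

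Once the rank-six property is established at every configuration of $\mathcal{S}_R^{1+2}$ --- generic ones via $\Delta\neq0$, collinear and antipodal ones via the augmented family --- the Lie algebra rank condition~\eqref{LARC} holds with $n=9$, and Theorem~\ref{Chow} applied with $\Omega=\mathcal{S}_R^{1+2}$ (which is open and connected) yields controllability of~\eqref{MMH12_2}, hence of the equivalent system~\eqref{MMH12}, in the sense of Definition~\ref{def_full_contr}. I expect the genuinely delicate point to be exactly the collinear, and especially the antipodal, degeneracy: selecting the additional odd-order bracket, computing its radial profile, and verifying that the resulting factored determinant never vanishes under well-separation. Everything else is a mechanical, if lengthy, extension of the bracket computation already carried out for a single passive particle.
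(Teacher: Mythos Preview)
Your plan is correct and follows the same overall route as the paper: pass to the $(\vecx,\vecd_1,\vecd_2)$ system, exploit the decoupling of the two displacement blocks, invoke Chow--Rashewskii, and resolve the collinear degeneracy with a third-order bracket. The paper executes this by fixing an adapted frame (active particle at the origin, $\vecd_1$ along the first axis, both displacements in a coordinate plane), computing two explicit $6\times6$ determinants $\delta^{(1)},\delta^{(2)}$ in that frame, and checking non-vanishing via the leading behaviour as $a\to0$ under well-separation; it does not articulate your parity observation, which is a cleaner way to see \emph{why} an odd-order bracket with radial content is forced at the antipodal configuration $\vecd_2=-\vecd_1$. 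One caution on your first test family: the six brackets $\vecv^k,\vecw^k$ inherited verbatim from Theorem~\ref{221} are \emph{not} the set the paper uses even in the non-collinear case --- its $\delta^{(1)}$ draws on five second-order brackets, including $[\vecg^1,[\vecg^1,\vecg^3]]$, $[\vecg^3,[\vecg^2,\vecg^3]]$, $[\vecg^2,[\vecg^2,\vecg^3]]$, which do not appear in the single-passive proof --- so you should expect to enlarge your family already at the generic step, not only on the collinear stratum, and to lean on the $a\ll R$ regime (as the paper does via the $a\to0$ limit) when verifying that the resulting determinants never vanish.
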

\begin{proof}
Since systems \eqref{MMH12} and \eqref{MMH12_2} can be transformed into one another through a linear transformation, proving the controllability of the former is equivalent to proving it for the latter.
Therefore, we compute the Lie brackets of the vector fields 
%Also in this case the proof relies on the application of the Chow Theorem \ref{Chow}, like in the case of one active and one passive particles. Let us use the equivalent system  \eqref{MMH12_2} since it is has a simpler expression. Thus we  prove that Lie brackets of the control vector fields of equation 
$\vecg^\ell \coloneqq (\vece^\ell,\vecH_1^\ell-\mathbb{I},\vecH_2^\ell-\mathbb{I})^\top$, for $\ell=1,2,3$. 
%generate all direction of motion.\\
%Then, using the same argument of Theorem {221} the last two steps allows us to conclude the controllability proof ensuring that the particles remain well separated.\\
%\textit{Step 1.}
%The control vector fields are

Given an arbitrary well-separated configuration of the active and two passive particles, consider a plane $\Pi$ through all three particles. Without loss of generality, we may choose a reference frame in which 
%If $\{\vecx^\circ,\vecy_1^\circ,\vecy_2^\circ\}$ are collinear, any small perturbation will suffice to identify the plane $\Pi$. 
%Therefore, it is not restrictive to choose a reference frame in which 
the third axis is perpendicular to the plane $\Pi$, the active particle is at the origin, and the first axis is chosen in the direction connecting the active particle with the first passive particle, that is, $\vecd_1$ points towards the positive $x$-axis. 
Using these coordinates we have
$$\vecx=(0,0,0)^\top,\qquad
\vecd_1=(d_{1,1},0,0)^\top,\qquad
\vecd_2=(d_{2,1},d_{2,2},0)^\top,$$
with $d_{1,1}>R$,  
$\lvert\vecd_2\rvert>R$, and $\lvert\vecd_1-\vecd_2\rvert>R$.

%$$\vecx^\circ=(0,0,0)^\top,\qquad
%\vecd_1^\circ=(d_{1,1}^\circ,0,0)^\top,\qquad
%\vecd_2^\circ=(d_{2,1}^\circ,d_{2,2}^\circ,0)^\top,$$
%with $d_{1,1}^0>R$, %and $d_{2,2}^\circ\neq 0$ 
%$\lvert\vecd_2^\circ\rvert>R$, and $\lvert\vecd_1^\circ-\vecd_2^\circ\rvert>R$.

%We fix this reference frame once and for all, so that the equations of motion can be solved by solving the trajectory $t\mapsto \vecx(t)$ for the active particle (namely, integrating the first equation in \eqref{MMH12_2}), solving for $t\mapsto(\vecd_1(t),\vecd_2(t))$, and then finding the trajectories $t\mapsto(\vecy_1(t),\vecy_2(t))$ of the passive particles by using \eqref{linear}.

The control vector fields of the six-dimensional reduced system consisting of the last two equations in \eqref{MMH12_2} are
\[\begin{split}
\hat\vecg^1=&\, \left(
\frac{3a}4 \bigg(\frac{1}{|\vecd_1|}+\frac{d_{1,1}^2}{|\vecd_1|^3}-\frac{4}{3a}\bigg),
0,
0,
\frac{3a}4 \bigg(\frac{1}{|\vecd_2|}+\frac{d_{2,1}^2}{|\vecd_2|^3}-\frac{4}{3a}\bigg),
\frac{3a}4 \frac{d_{2,1}d_{2,2}}{|\vecd_2|^3},
0%,1,0,0
\right)^\top, \\
\hat\vecg^2=&\, \left(
0,
\frac{3a}4 \bigg(\frac{1}{|\vecd_1|}-\frac{4}{3a}\bigg),
0,
\frac{3a}4 \frac{d_{2,1}d_{2,2}}{|\vecd_2|^3},
\frac{3a}4 \bigg(\frac{1}{|\vecd_2|}+\frac{d_{2,2}^2}{|\vecd_2|^3}-\frac4{3a}\bigg),
0%,0,1,0
\right)^\top, \\
\hat\vecg^3=&\, \left(
0,
0,
\frac{3a}4 \bigg(\frac{1}{|\vecd_1|}-\frac{4}{3a}\bigg),
0,
0,
\frac{3a}4 \bigg(\frac{1}{|\vecd_2|}-\frac4{3a}\bigg)
%,0,0,1
\right)^\top,
\end{split}\]
We now compute the following Lie bracket of the vector fields $\hat\vecg^\ell$, $\ell=1,2,3$
\begin{equation*}
\hat\vecv^1\coloneqq[\hat\vecg^1,\hat\vecg^2]=\begin{pmatrix}
0\\
\frac{3a(8|\vecd_1|-9a)}{16|\vecd_1|^3}\\
0\\
\frac{3a d_{2,2}(9a-8|\vecd_2|)}{16|\vecd_2|^4}\\[4mm]
\frac{3a d_{2,1}((8|\vecd_1|-9a)}{16|\vecd_1|^4}\\
0%\\
%0\\
%0\\
%0
\end{pmatrix},\qquad \hat\vecv^2\coloneqq[\hat\vecg^1,\hat\vecg^3]=\begin{pmatrix}
0\\
\frac{3a(8|\vecd_1|-9a)}{16|\vecd_1|^3}\\0\\0\\
\frac{3a d_{2,1}(9a-8|\vecd_2|)}{16|\vecd_2|^4}\\0
\end{pmatrix},
\end{equation*}
\begin{equation*}
\small
\hat\vecw^1\coloneqq[\hat\vecg^1,\hat\vecv^1]=\begin{pmatrix}
0\\%\displaystyle\frac{3a(54 a^2|\vecd_1|^2-84 a |\vecd_1|^3+32|\vecd_1|^4)}{64|\vecd_1|^7}\\
\frac{3a(189a^2-228a|\vecd_1|+64|\vecd_1|^2)}{64|\vecd_1|^5}\\%0\\
0\\
-\frac{9a d_{2,1}d_{2,2}(81a^2-104a|\vecd_2|+32|\vecd_2|^2)}{64|\vecd_2|^7}\\[4mm]
\frac{3a(27a^2(7d_{2,1}^2-2d_{2,2}^2)+12a|\vecd_2|(7d_{2,2}^2-19d_{2,1}^2)+32(2d_{2,1}^4+d_{2,1}^2d_{2,2}^2-d_{2,2}^4))}{64|\vecd_2|^7}\\
0
%0\\
%0\\
%0
\end{pmatrix},
\end{equation*}
\begin{equation*}
\small
\hat\vecw^2\coloneqq[\hat\vecg^2,\hat\vecv^1]=\begin{pmatrix}
\frac{3a(54 a^2|\vecd_1|^2-84 a |\vecd_1|^3+32|\vecd_1|^4)}{64|\vecd_1|^7}\\
0\\
0\\
\frac{3a(27a^2(2d_{2,1}^2-7d_{2,2}^2)-12a|\vecd_2|(7d_{2,1}^2-19d_{2,2}^2)+32|\vecd_2|^2(d_{2,1}^2-2d_{2,2}^2)}{64|\vecd_2|^7}\\[4mm]
\frac{9a d_{2,1}d_{2,2}(81a^2-104a|\vecd_2|+32|\vecd_2|^2)}{64|\vecd_2|^7}%\\
%0\\
%0\\
%0
\end{pmatrix},
\end{equation*}
\begin{equation*}
\hat\vecw^3\coloneqq[\hat\vecg^3,\hat\vecv^2]=\begin{pmatrix}
\frac{3a(54 a^2|\vecd_1|^2-84 a |\vecd_1|^3+32|\vecd_1|^4)}{64|\vecd_1|^7}\\
0\\
0\\
\frac{3a(27a^2(2d_{2,1}^2+d_{2,2}^2)-12a|\vecd_2|(7d_{2,1}^2+5d_{2,2}^2)+32|\vecd_2|^4}{64|\vecd_2|^7}\\[4mm]
\frac{9a^2d_{2,1}d_{2,2}(9a-8|\vecd_2|)}{64|\vecd_2|^7}\\
0%\\
%0\\
%0\\
%0
\end{pmatrix},
\end{equation*}
\begin{equation*}
\small
\hat\vecw^4\coloneqq[\hat\vecg^1,\hat\vecv^2]=\begin{pmatrix}
0\\
0\\
\frac{3a(189a^2|\vecd_1|^2-228a|\vecd_1|^3+64|\vecd_1|^4)}{64|\vecd_1|^7}\\
0\\
0\\
\frac{3a(27a^2(7d_{2,1}^2-d_{2,2}^2)+12a|\vecd_2|(5d_{2,2}-19d_{2,1})+32(2d_{2,1}^4+d_{2,1}^2d_{2,2,}^2-d_{2,2}^4)}{64|\vecd_2|^7}%\\
%0\\
%0\\
%0
\end{pmatrix},
\end{equation*}
\begin{equation*}
\hat\vecw^5\coloneqq[\hat\vecg^3,[\hat\vecg^2,\hat\vecg^3]]=\begin{pmatrix}
0\\
\frac{3a(27a^2|\vecd_1|^2-60a|\vecd_1|^3+32|\vecd_1|^4)}{64|\vecd_1|^7}\\
0\\
\frac{9a^2d_{2,1}d_{2,2}(9a-8|\vecd_2|)}{64|\vecd_2|^7}\\
\frac{3a(32|\vecd_2|^4-12a|\vecd_2|(5d_{2,1}^2+7d_{2,2}^2)+27a^2(d_{2,1}^2+2d_{2,2}^2)}{64|\vecd_2|^7}\\
0%\\
%0\\
%0\\
%0
\end{pmatrix},
\end{equation*}
\begin{equation*}
\small
\hat\vecw^6\coloneqq[\hat\vecg^2,[\hat\vecg^2,\hat\vecg^3]]=\begin{pmatrix}
0\\0\\
\frac{-3a(27a^2|\vecd_1|^2-60a|\vecd_1|^3+32|\vecd_1|^4)}{64|\vecd_1|^7}\\
0\\
0\\
\frac{-3a(32(d_{2,1}^2-2d_{2,2}^2)-12a|\vecd_2|(5d_{2,1}^2-19d_{2,2}^2)+27a^2(d_{2,1}^2-7d_{2,2}^2))}{64|\vecd_2|^7}%\\
%0\\
%0\\
%0
\end{pmatrix},
\end{equation*}
\begin{equation*}
\small
\begin{split}
&\hat\vecw^7\coloneqq[\hat\vecg^1,\hat\vecw^2]\\
=&\begin{pmatrix}
-\frac{9a(27a^3-54a^2|\vecd_1|+36a|\vecd_1|^2-8|\vecd_1|^3)}{16|\vecd_1|^7}\\
0\\
%\displaystyle\frac{-3a(27a^2|\vecd_1|^2-60a|\vecd_1|^3+32|\vecd_1|^4)}{64|\vecd_1|^7}\\
0\\
\frac{9ad_{2,1}(16|\vecd_2|^3(d_{2,1}^2-4d_{2,2}^2)-12a|\vecd_2|^2(6d_{2,1}^2-29d_{2,2}^2)+36a^2|\vecd_2|(3d_{2,1}^2-17d_{2,2}^2)-27a^3(2d_{2,1}^2-13d_{2,2}^2))}{32|\vecd_2|^{10}}\\[4mm]
%\displaystyle\frac{-3a(32(d_{2,1}^2-2d_{2,2}^2)-12a|\vecd_2|(5d_{2,1}^2-19d_{2,2}^2)+27a^2(d_{2,1}^2-7d_{2,2}^2))}{64|\vecd_2|^7}%\\
\frac{9ad_{2,2}(16|\vecd_2|^3(4d_{2,1}^2-d_{2,2}^2)-12a|\vecd_2|^2(29d_{2,1}^2-6d_{2,2}^2)+36a^2|\vecd_2|(17d_{2,1}^2-3d_{2,2}^2)-27a^3(13d_{2,1}^2-2d_{2,2}^2))}{32|\vecd_2|^{10}}\\
0\\
%0\\
%0
\end{pmatrix}.
\end{split}
\end{equation*}
To prove that all these vector fields are linearly independent, we compute the determinant of the matrix obtained by juxtaposing them:
%This determinant is
\[\begin{split}
\delta^{(1)}(a)\coloneqq &\, \det\big(%\vecg^1|\vecg^2|\vecg^3|
\hat\vecv^1|\hat\vecw^2|\hat\vecw^3|\hat\vecw^4|\hat\vecw^5|\hat\vecw^6\big) \\
=&\, \frac{6561a^6d_{2,2}p_1^{(1)}(\vecd_1)p_2^{(1)}(\vecd_1)p_3^{(1)}(\vecd_2)p_4^{(1)}(\vecd_1,\vecd_2)}{2^{25}|\vecd_1|^{13}|\vecd_2|^{19}}
\end{split}\]
where
\begin{equation*}
\small
\begin{aligned}
p_1^{(1)}(\vecd_1)\coloneqq&\,(9a-8|\vecd_1|)\\
p_2^{(1)}(\vecd_1)\coloneqq&\,(27a^2-42a|\vecd_1|+16|\vecd_1|^2)\\
p_3^{(1)}(\vecd_2)\coloneqq&\,(243a^4-702a^3|\vecd_2|+756a^2|\vecd_2|^2-360a|\vecd_2|^3+64|\vecd_2|^4)\\
p_4^{(1)}(\vecd_1,\vecd_2)\coloneqq&\,-729a^4d_{2,2}^2-64|\vecd_1|^2|\vecd_2|^2d_{2,2}^2 -18a^2\big(13d_{2,2}^2(|\vecd_1|^2+d_{2,2}^2)-5d_{2,1}^4\\
&+56|\vecd_1||\vecd_2|d_{2,2}^2+d_{2,1}^2(5|\vecd_1|^2+8d_{2,2}^2)\big)\\
&+108a^3\big(d_{2,1}^2(|\vecd_1|-|\vecd_2|)+8d_{2,2}^2(|\vecd_1|+|\vecd_2|)\big)\\
&+24a|\vecd_1| \big(d_{2,1}^2(8d_{2,2}^2+3|\vecd_1||\vecd_2|)+11(d_{2,2}^4+|\vecd_1|d_{2,2}^2|\vecd_2|)-3d_{2,1}^4\big).
\end{aligned}
\end{equation*}
Since $\delta^{(1)}(a)/a^6\to 6561d_{2,2}^4/(64|\vecd_1|^8|\vecd_2|^{13})\neq0$ as $a\to0$, by continuity, $\delta^{(1)}(a)\neq0$ for $a\ll1$, provided that $d_{2,2}\neq0$, that is, this determinant does not vanish unless the three particles are collinear. To handle the case of collinear configurations, we also compute 
$$\delta^{(2)}(a)=\det\big(\hat\vecv^1|\hat\vecv^2|\hat\vecw^1|\hat\vecw^4|\hat\vecw^2|\hat\vecw^7\big)=\frac{2187 a^6 p_1^{(2)}(\vecd_1,\vecd_2)p_2^{(2)}(\vecd_1,\vecd_2)}{2^{31}|\vecd_1|^{19}|\vecd_2|^{24}},$$
where
\begin{equation*}
\small
\begin{aligned}
p_1^{(2)}(\vecd_1,\vecd_2)\coloneqq &\, d_{2,1}|\vecd_2|^3p_1^{(1)}(\vecd_2)r_1(\vecd_1) -|\vecd_1|^2p_1^{(1)}(\vecd_1)r_2(\vecd_2),\\
p_2^{(2)}(\vecd_1,\vecd_2)\coloneqq&\, p_1^{(1)}(\vecd_2)r_1(\vecd_1)\big(d_{2,2}q_1(\vecd_1,\vecd_2)+d_{2,1}q_2(\vecd_1,\vecd_2)\big)\\
&\,+|\vecd_1|^2 p_1^{(1)}(\vecd_1)\big(3d_{2,1}d_{2,2}r_3(\vecd_2)q_1(\vecd_1,\vecd_2)+r_4(d_{2,1},d_{2,2})q_2(\vecd_1,\vecd_2)\big),
\end{aligned}
\end{equation*}
with
\begin{equation*}
\begin{aligned}
q_1(\vecd_1,\vecd_2)\coloneqq&\,6d_{2,1}d_{2,2}s_1(\vecd_1)r_3(\vecd_2)+2|\vecd_1|^2d_{2,2}s_2(\vecd_1)s_3(d_{2,1},d_{2,2}),
\\
q_2(\vecd_1,\vecd_2)\coloneqq&\, 2|\vecd_1|^2\big(s_1(\vecd_1)r_4(d_{2,2},d_{2,1})+|\vecd_1|^2d_{2,1}s_2(\vecd_1)s_3(d_{2,2},d_{2,1})\big),
\end{aligned}
\end{equation*}
and 
\begin{equation*}
\begin{aligned}
r_1(\vecd_1)\coloneqq&\,(189a^2-228a|\vecd_1|+64|\vecd_1|^2),
\\
r_2(\vecd_2)\coloneqq&\,27a^2(7d_{2,1}^2-d_{2,2}^2)+12a|\vecd_2|(5d_{2,2}^2-19d_{2,1}^2)\\
&\,+32(2d_{2,1}^4+d_{2,1}^2d_{2,2}^2-d_{2,2}^4),\\
r_3(\vecd_2)\coloneqq&\,81a^2-104a|\vecd_2|+32|\vecd_2|^2,\\
r_4(\alpha,\beta)\coloneqq&\,27a^2(7\alpha^2-2\beta^2)+12a\sqrt{\alpha^2+\beta^2}(7\beta^2-19\alpha^2)+32(2\alpha^4+\alpha^2\beta^2-\beta^4),
\end{aligned}
\end{equation*}
\begin{equation*}
\begin{aligned}
s_1(\vecd_1)\coloneqq &\,27 a^3-54a^2|\vecd_1|+36a|\vecd_1|^2-8|\vecd_1|^3, \\
s_2(\vecd_1)\coloneqq& \, 27 a^2-42a|\vecd_1|+16|\vecd_1|^2, \\
s_3(\alpha,\beta)\coloneqq&\, 27a^3(13\alpha^2-2\beta^2)-36a^2\sqrt{\alpha^2+\beta^2}(17\alpha^2-3\beta^2)\\
&\,+12a(29\alpha^4+23\alpha^2\beta^2-6\beta^4)+16\sqrt{\alpha^2+\beta^2}(\beta^4-3\alpha^2\beta^2-4\alpha^4).
\end{aligned}
\end{equation*}
Observe now that 
\begin{equation*}
\frac{\delta^{(2)}(a)}{a^6}\to\frac{2187 d_{2,1}\big(A(\vecd_1,\vecd_2)-B(\vecd_2)\big)\big(|\vecd_1|A(\vecd_1,\vecd_2)-2|\vecd_1|B(\vecd_2)+2|\vecd_2|^4\big)}{64|\vecd_1|^{10}|\vecd_2|^{15}}
\end{equation*}
as $a\to0$, where $A(\vecd_1,\vecd_2)\coloneqq|\vecd_1|(2d_{2,1}^2-d_{2,2}^2)$ and $B(\vecd_2)\coloneqq 2d_{2,1}|\vecd_2|^2$. 
When $d_{2,2}=0$, we have $A(\vecd_1,\vecd_2)=2d_{2,1}^2|\vecd_1|$ and $B(\vecd_2)=2d_{2,1}^3$, which implies that (recalling that $\vecd_1=(d_{1,1},0)$)
\begin{equation*}
\frac{\delta^{(2)}(a)}{a^6}\bigg|_{d_{2,2}=0}\to\frac{2187 (d_{1,1}-d_{2,1})^3}{16d_{1,1}^{10}d_{2,1}^{10}}.
\end{equation*}
Since this last expression vanishes only when $d_{1,1}=d_{2,1}$, which violates the condition of a minimum separation between the passive particles, we obtain that, by continuity, for $a\ll 1$, $\delta^{(2)}(a)\neq 0$ when $\delta^{(1)}(a)= 0$. 
Therefore, we have that either $\delta^{(1)}(a)$ or $\delta^{(2)}(a)$ do not vanish, proving that either one of the two sets of brackets $\{\hat\vecv^1|\hat\vecw^2|\hat\vecw^3|\hat\vecw^4|\hat\vecw^5|\hat\vecw^6\}$ and $\{\hat\vecv^1|\hat\vecv^2|\hat\vecw^1|\hat\vecw^4|\hat\vecw^2|\hat\vecw^7\}$ are linearly independent. Thus condition \eqref{LARC} is always satisfied and the theorem is proved.
% \textcolor{red}{HS I propose to consider a different set of vector fields to handle the collinear case, namely:\\
% $$
% \begin{aligned}&v^1 = [g^1, g^2],\\& [g^1,g^3],\\& [g^1, [g^1,g^2]],\\& w^3 = [g^1, [g^1,g^3]],\\& w^1 = [g^2, v^1],\\& [g^1, w^1]\end{aligned}$$.\\
% This requires a third-order bracket but means that we don't need an additional argument to handle the case where they start collinear. Marta: can you check the determinant?}
\end{proof}
\section{Kinetic limit}
\label{sec:kinetic}
In this section, we investigate the behavior of the dynamics in \eqref{MM01_2} in the scenario when the number of spheres tends to infinity, preserving the ratio $N/M$ of active particles over passive ones. 
%Looking at equation \eqref{ODE_N} of the finite dimensional system it is evident that the vector fields $\vecG^\ell$ are not bounded therefore we perform a regularization
%To avoid particle collisions, which would invalidate the far field approximation, since $|\vecd_{ij}|>R>>a$ $i=1,\ldots N$, $j=1,\ldots M$, we can rewrite the vector fields in \eqref{MM01} substituting in the denominator $2a+|\vecd_{ij}|$ instead of only $|\vecd_{ij}||$, so that we have the boundedness of the vector fields. We will end up with the following ODE system
%\begin{equation}
%\label{eq_i_sphere2_approx}
%\small
%\begin{cases}
%\begin{aligned}
%\dot \vecy_j=&\frac{3a}{4}\sum_{i=1}^N \Big[\frac{\vecu_i}{2a+|\vecd_{ij}|}+\frac{\bigl(\vecu_i\cdot\vecd_{ij}\bigr)(\vecd_{ij})}{(2a+|\vecd_{ij}|)^3}\Big]\, j=1\ldots M
%\end{aligned}\\\\
%\dot\vecx_i=\vecu_i\quad i=1\ldots N
%\end{cases}
%\end{equation}
To do so, we will use a Boltzmann-type approach for dynamical systems involving binary interactions; in our case, the interaction is derived from the velocity of the passive particles.
Structurally, this is the system considered in \eqref{MM01_2_1}, which we report here, for the convenience of the reader, using the notation of \eqref{MM01}
\begin{equation}\label{MM01_2_1_G}
\begin{cases}
\dot\vecx=\vecu,\\
\dot{\vecy}=\vecG(\vecy-\vecx)\vecu.
\end{cases}
\end{equation}

Let us denote by $\vecX$ and $\vecY$ the positions of two general particles of the ensemble $\mathcal{E}^{N+M}\coloneqq\{\vecx_i\}_{i=1}^N\cup\{\vecy_j\}_{j=1}^M$ and let $\Theta_{\vecX},\Theta_{\vecY}\in\{0,1\}$ be two discrete random variables describing the probability of finding an active particle at $\vecX$ or at $\vecY$.
More specifically, we assume that both $\Theta_{\vecX}$ and $\Theta_{\vecY}$ have a Bernoulli distribution of parameter $p=N/M$ and that 
$$\mathbb{P}\{\text{$\vecX$ is active}\}=\mathbb{P}\{\Theta_{\vecX}=1\}=p
\quad\text{and}\quad
\mathbb{P}\{\text{$\vecY$ is active}\}=\mathbb{P}\{\Theta_{\vecY}=1\}=p.$$
When drawing at random two particles %$\vecxi$ and $\veceta$ 
from the ensemble $\mathcal{E}^{N+M}$, we can reformulate the dynamics of \eqref{MM01_2_1_G} as follows:
\begin{subequations}\label{Kinetic_inter_main}
\begin{equation}
\label{Kinetic_inter}
\vecX^*=\vecX+\tau\Theta_{\vecX}\vecu +\tau(1-\Theta_{\vecX})\Theta_{\vecY} \vecG(\vecX-\vecY)\vecu\,,
\end{equation}
where $\tau>0$ is a characteristic time of interaction and $\vecX^*$ is the position of the particle that was at $\vecX$ after the interaction with the particle at $\vecY$.

We briefly describe the dynamics in \eqref{Kinetic_inter}.
If $\Theta_{\vecX}=1$, then $\vecX$ is occupied by an active particle and \eqref{Kinetic_inter} reads $\vecX^*=\vecX+\tau\vecu$ (regardless of the nature of $\vecY$), which is the first equation in \eqref{MM01_2_1_G}. 
If $\Theta_{\vecX}=0$ and $\Theta_{\vecY}=1$, then $\vecX$ is occupied by a passive particle and $\vecY$ by an active one; \eqref{Kinetic_inter} reads $\vecX^*=\vecX+\tau\vecG(\vecX-\vecY)\vecu$, which is the second equation in \eqref{MM01_2_1_G}.\footnote{Notice that in both cases this is an explicit Euler scheme for the dynamics of \eqref{MM01_2_1_G}.}
In the remaining case of drawing two passive particles, there is no dynamics between them.
Analogously, the particle at $\vecY$ is subject to
\begin{equation}
\label{Kinetic_inter_b}
\vecY^*=\vecY+\tau\Theta_{\vecY}\vecu +\tau(1-\Theta_{\vecY})\Theta_{\vecX} \vecG(\vecY-\vecX)\vecu\,.
\end{equation}
\end{subequations}

%are interacting with each other, we have that the state $\hat{\vecx}$ of the particle $\vecx$ after the interaction with the particle $\vecy$ is an affine function of the controls \textcolor{red}{(scrivere meglio il ``prodotto'' matrice per vettore $Gu^*$)}
%\begin{equation}
%\label{Kinetic_inter}
%\small
%\begin{aligned}
%\hat\vecx=\vecx+\vecu_\alpha\Theta+\alpha(1-\Theta)\Theta^* \sum_{\ell=1}^3 G^\ell(\vecx-\vecy)(u_{\alpha}^{*})_\ell\,,
%\end{aligned}
%\end{equation}
%where $\vecu_\alpha$ and $\vecu^*_\alpha$ are measuarable functions of time, $\Theta$ is a random variable which characterizes the probability that the particle $\vecx$ is controlled through the control $\vecu_\alpha$ and $\Theta^*$ a random variable which characterizes the probability that the particle $\vecy$ is controlled via the control $\vecu^*_\alpha$. More precisely their law is a Bernoulli one, so that we have
%$$
%\mathcal{P}\{\Theta=1\}=p\in[0,1]\qquad\mathcal{P}\{\Theta^*=1\}=p\in[0,1].
%$$

%Thus we now show which is the continuity equation that the density of agents should satisfy, starting from the microscopic dynamics \eqref{Kinetic_inter}.\\

We now define $\mu(\vecX,t)$ to be the density of particles of $\mathcal{E}^{N+M}$ at point $\vecX\in\mathbb{R}^3$ at time $t$.
As postulated at the beginning of this section, the time evolution of $\mu$ is governed by the following Boltzmann-type equation
\begin{equation}
\label{Boltzman1}
\partial_t \mu(\vecX,t) = \mathbb{Q}_\tau[\mu,\mu](\vecX,t)\,,
\end{equation}
where $\mathbb{Q}_\tau$ is an interaction operator which accounts for the gain and loss of particles at position $\vecX$ and has the expression (see, e.g., \cite[formula (4.22)]{AFK} or \cite[formula (3.1)]{Toscani})
\begin{equation}
\label{interaction_kernel}
\begin{split}
\mathbb{Q}_\tau[\mu,\mu](\vecX,t) = \sigma \bigg\langle\!\bigg\langle\int_{\mathcal{S}_R(\vecX)} \bigg(\frac{1}{\mathcal{J}_\tau}\mu(\vecX_*,t)\mu(\vecY_*,t)-\mu(\vecX,t)\mu(\vecY,t)\bigg)\mathrm{d}\vecY\bigg\rangle\!\bigg\rangle\,.
\end{split}
\end{equation}
In \eqref{interaction_kernel} above, $\sigma>0$ denotes the interaction rate, $(\vecX_*,\vecY_*)$ are the pre-interaction positions that generate $(\vecX,\vecY)$ via the interaction rule \eqref{Kinetic_inter_main}, $\mathcal{J}_\tau$ is the Jacobian determinant of the transformation $(\vecX,\vecY)\to(\vecX^*,\vecY^*)$ in \eqref{Kinetic_inter_main} (and, of course, of the transformation $(\vecX_*,\vecY_*)\to(\vecX,\vecY)$), $\mathcal{S}_R(\vecX)=\mathbb{R}^3\setminus \overline{B}_R(\vecX)$, and the average $\langle\!\langle\cdot\rangle\!\rangle$ is taken over $\Theta_{\vecX}$ and $\Theta_{\vecY}$.

%where $\mathcal{J}_\alpha$ represents the Jacobian of the transformation $(\vecx,\vecy)\to(\hat\vecx,\hat\vecy)$ and $\eta$ the frequency of interaction.\\

%Consider a kinetic model for the evolution of the density $\mu=\mu(\vecx,t)$ of particles, with $\vecx\in\mathbb{R}^3$ at time $t$, ruled by the following Boltzmann-type equation
%\begin{equation}
%\label{Boltzman1}
%\partial_t \mu(\vecx,t)=\mathbb{Q}_\alpha(\mu,\mu)(\vecx,t)\,,
%\end{equation}
%where $\mathbb{Q}_\alpha$ is an interaction operator which accounts the gain and loss of agents in position $\vecx$.
%\begin{equation}
%\label{interaction_kernel}
%\mathbb{Q}_\alpha=\eta\Big\langle\int_{\mathbb{R}^3} \frac{1}{\mathcal{J}_\alpha}\mu(\hat \vecx,t)\mu(\hat \vecy,t)-\mu(\vecx,t)\mu(\vecy,t)\,d\vecy\Big\rangle\,,
%\end{equation}
%where $\mathcal{J}_\alpha$ represents the Jacobian of the transformation $(\vecx,\vecy)\to(\hat\vecx,\hat\vecy)$ and $\eta$ the frequency of interaction.

We now give the notion of weak solution to \eqref{Boltzman1}, for which we need to introduce the symbol $\mathcal{P}_1(\mathbb{R}^3)$ to denote the probability measures on $\mathbb{R}^3$ with finite first moment. 
%In order to state the main result, namely performing the quasi invariant limit for interaction strength low and frequency high, for the binary dynamics \eqref{Kinetic_inter}, we give the following definition
\begin{definition}\label{def_weaksolBoltz}
A time-dependent probability density $[0,+\infty)\ni t\mapsto\mu(\cdot,t)$ is called a weak solution to the Boltzmann equation \eqref{Boltzman1} with initial datum $\mu_0\in\mathcal{P}_1(\mathbb{R}^3)$ if 
%there exists $T>0$ such that 
$\mu\in L^2(0,+\infty;\mathcal{P}_1(\mathbb{R}^3))$ and satisfies the weak form of \eqref{Boltzman1}, 
$$
\frac{\mathrm{d}}{\mathrm{d}t} \langle\mu,\varphi\rangle=\langle\mathbb{Q}_\tau[\mu,\mu],\varphi\rangle = \sigma \bigg\langle\!\bigg\langle \int_{\mathbb{R}^3}\int_{\mathcal{S}_R(\vecX)}\!\!(\varphi(\vecX^*)-\varphi(\vecX))\mu(\vecX,t)\mu(\vecY,t)\,\mathrm{d}\vecY\mathrm{d}\vecX\bigg\rangle\!\bigg\rangle,
$$
for all $t\in[0,+\infty)$ and all $\varphi\in C^\infty_c(\mathbb{R}^3)$, and it recovers the initial datum in the sense
\begin{equation*}%\label{eq_initialdatum}
\lim_{t\to0^+} \int_{\mathbb{R}^3} \varphi(\vecX)\mu(\vecX,t)\,\mathrm{d}\vecX= \int_{\mathbb{R}^3} \varphi(\vecX)\mu_0(\vecX)\,\mathrm{d}\vecX,
\end{equation*}
for every $\varphi\in C^\infty_c(\mathbb{R}^3)$. % and 
%where
%$$
%\bigl\langle\mathbb{Q}_\alpha(\mu,\mu),\varphi\bigr\rangle=\Big\langle\int_{\mathbb{R}^3}\int_{\mathbb{R}^3}(\varphi(\hat\vecx)-\varphi(\vecx))\mu(\vecx)\mu(\vecy)\,d\vecx\,d\vecy\Big\rangle.
%$$
\end{definition}
We are now ready to state the main result of this section, namely to obtain the equation satisfied by the density of particles $\mu$ in the limit $\sigma\to 0$ of low interaction strength and $\tau\to+\infty$ of high frequency of interactions. We will do this in the specific regime $\sigma=1/\tau$ that balances the limits of these two quantities.\footnote{For the sake of generality, we could assume that $\sigma\tau\to c$, where $c>0$ is any constant, but we choose $c=1$ for convenience. Likewise, we could consider the limits $\sigma\tau\to0$ or $\sigma\tau\to+\infty$, but either situation means that the interaction time scale is different from the frequency scale, so that we could expect less significant behaviors in the limit.}
\begin{theorem}\label{K_limit}
Let $\vecu\in L^\infty(0,+\infty;\mathbb{R}^3)$ and let $\mu_0\in\mathcal{P}_1(\mathbb{R}^3)$; for every $\tau>0$, let $\mu^\tau$ be a weak solution to the Boltzmann equation \eqref{Boltzman1} with $\sigma=1/\tau$, according to Definition~\ref{def_weaksolBoltz}.
Then for $\tau\to0$, the densities of particles $\{\mu^\tau\}_{\tau>0}$ converge pointwise, up to a subsequence, to a measure $\mu\in L^2(0,+\infty;\mathcal{P}_1(\mathbb{R}^3))$ that satisfies
\begin{equation}\label{PDE}
\partial_t \mu+\Div\bigg[\bigg(\int_{\mathcal{S}_R(\vecX)}
%\big\langle\!\big\langle 
\mathcal{K}(\vecX,\vecY,\vecu)
%\big\rangle\!\big\rangle 
\mu(\vecY,t)\,\mathrm{d}\vecY\bigg)\mu\bigg]=0
\end{equation}
%\marginpar{\textcolor{red}{\footnotesize{ $\vecu$ dipende o no da x? Se no, come faccio a distinguere $\vecu$ da $\vecu^*$?}}}
with initial datum $\mu_0$\,, where 
\begin{equation}\label{K}
%\big\langle\!\big\langle 
\mathcal{K}(\vecX,\vecY,\vecu)
%\big\rangle\!\big\rangle 
=p(1-p)\vecG(\vecY-\vecX) \vecu+p\vecu\,.
\end{equation}
%Let us fix controls $\vecu_\alpha$ and $\vecu^*_\alpha$, measurable functions of time which stay in a compact set of $\mathbb{R}^{3m}$ and suppose that
%\begin{align*}
%&\lim_{\alpha\to 0} \vecu_\alpha(t)=\bar\vecu(t)\,,\\
%&\lim_{\alpha\to 0} \vecu^*_\alpha(t)=\bar\vecu^*(t).
%\end{align*} 
%Consider a weak solution $\mu$ of \eqref{Boltzman1} with initial datum $\mu_0(x)$. %Thus introducing the following scaling for the time of interaction $\alpha$ and the frequency $\eta$
%$$
%\alpha=\epsilon\qquad \eta=\frac{1}{\epsilon}
%$$
%for the binary interaction \eqref{Kinetic_inter}, and defining by $\mu^\epsilon(\vecx,t)$ a solution for the scaled equation \eqref{Boltzman1}, for $\epsilon\to 0$ $\mu^\epsilon(\vecx,t)$ converges pointwise, up to a subsequence, to $\mu(\vecx,t)$ where $\mu$ satisfies
%\begin{equation}\label{PDE}
%\partial_t \mu+\Div\Big[\Big(\int_{\mathbb{R}^3}\Big\langle K(\vecx,\vecy,\bar\vecu,\bar\vecu^*)\Big\rangle \mu(\vecy,t)\,dy\Big)\mu\Big]=0\,,
%\end{equation}
%%\marginpar{\textcolor{red}{\footnotesize{ $\vecu$ dipende o no da x? Se no, come faccio a distinguere $\vecu$ da $\vecu^*$?}}}
%where 
%\begin{equation}
%\label{K}
%\Big\langle K(\vecx,\vecy,\bar\vecu,\bar\vecu^*)\Big\rangle =p(1-p)\sum_{\ell=1}^3G^\ell(\vecx-\vecy)\bar u_{\ell}^{*}+\bar\vecu p\,.
%\end{equation}
\end{theorem}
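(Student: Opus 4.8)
The plan is to pass to the limit $\tau\to0$ directly in the weak formulation of Definition~\ref{def_weaksolBoltz}, exploiting the fact that each post-interaction displacement $\vecX^*-\vecX$ is of order $\tau$ while the prefactor $\sigma=1/\tau$ exactly compensates this scaling. Fix $\varphi\in C^\infty_c(\mathbb{R}^3)$. First I would Taylor expand: writing $\vecX^*-\vecX=\tau\,\vecb_\tau(\vecX,\vecY)$ with
\[
\vecb_\tau(\vecX,\vecY)\coloneqq \Theta_{\vecX}\vecu+(1-\Theta_{\vecX})\Theta_{\vecY}\vecG(\vecX-\vecY)\vecu,
\]
and noting that on $\mathcal{S}_R(\vecX)$ one has $|\vecX-\vecY|>R$, so that $\vecG$ is bounded (and $\vecu\in L^\infty$), we obtain
\[
\frac{1}{\tau}\big(\varphi(\vecX^*)-\varphi(\vecX)\big)=\nabla\varphi(\vecX)\cdot\vecb_\tau(\vecX,\vecY)+\tau\,\mathcal{R}_\tau(\vecX,\vecY),
\]
where the remainder $\mathcal{R}_\tau$ is controlled by $\|\nabla^2\varphi\|_\infty$, $\|\vecu\|_\infty$, and $\sup_{\mathcal{S}_R}|\vecG|$, hence is bounded uniformly in $\tau$.

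Next I would average over the two independent Bernoulli variables of parameter $p$, which gives $\langle\!\langle\Theta_{\vecX}\rangle\!\rangle=p$ and $\langle\!\langle(1-\Theta_{\vecX})\Theta_{\vecY}\rangle\!\rangle=p(1-p)$, so that $\langle\!\langle\vecb_\tau\rangle\!\rangle=p\vecu+p(1-p)\vecG(\vecX-\vecY)\vecu=\mathcal{K}(\vecX,\vecY,\vecu)$, using that $\vecG$ is even in its argument. Substituting into the weak form with $\sigma=1/\tau$ yields
\[
\frac{\mathrm d}{\mathrm dt}\langle\mu^\tau,\varphi\rangle=\int_{\mathbb{R}^3}\!\int_{\mathcal{S}_R(\vecX)}\!\nabla\varphi(\vecX)\cdot\mathcal{K}(\vecX,\vecY,\vecu)\,\mu^\tau(\vecX,t)\mu^\tau(\vecY,t)\,\mathrm d\vecY\,\mathrm d\vecX+\tau\,E_\tau(t),
\]
with $|E_\tau(t)|$ bounded uniformly in $t$ and $\tau$ (using that $\mu^\tau(\cdot,t)$ is a probability measure, so the $\mathcal{R}_\tau$ integral is finite).

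The third step is compactness. The uniform-in-$\tau$ control of the first moment together with the boundedness of the right-hand side above shows that $t\mapsto\mu^\tau(\cdot,t)$ is tight and equicontinuous for the narrow topology; a Prokhorov/Ascoli--Arzel\`a argument then extracts a subsequence converging, for every $t$, to some $\mu(\cdot,t)$ with $\mu\in L^2(0,+\infty;\mathcal{P}_1(\mathbb{R}^3))$. Passing to the limit, the error $\tau E_\tau(t)\to0$, and it remains to identify the limit of the quadratic term. Here I would use that the product measures converge narrowly, $\mu^\tau(\cdot,t)\otimes\mu^\tau(\cdot,t)\rightharpoonup\mu(\cdot,t)\otimes\mu(\cdot,t)$, and that the integrand $(\vecX,\vecY)\mapsto\nabla\varphi(\vecX)\cdot\mathcal{K}(\vecX,\vecY,\vecu)\,\mathbbm{1}_{\mathcal{S}_R(\vecX)}(\vecY)$ is bounded and continuous off the null set $\{|\vecX-\vecY|=R\}$. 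This produces the weak form
\[
\frac{\mathrm d}{\mathrm dt}\langle\mu,\varphi\rangle=\int_{\mathbb{R}^3}\nabla\varphi(\vecX)\cdot\bigg(\int_{\mathcal{S}_R(\vecX)}\mathcal{K}(\vecX,\vecY,\vecu)\,\mu(\vecY,t)\,\mathrm d\vecY\bigg)\mu(\vecX,t)\,\mathrm d\vecX,
\]
which, after integration by parts, is precisely the weak form of \eqref{PDE}, while recovery of $\mu_0$ is inherited from the $\tau$-problems.

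The step I expect to be the main obstacle is this last passage to the limit in the bilinear term: narrow convergence alone does not in general pass to products, so the argument hinges on upgrading the convergence to be uniform in $t$ and on the boundedness and almost-everywhere continuity of the kernel on $\mathcal{S}_R$, together with ruling out concentration of the limit measure on the separation sphere $\{|\vecX-\vecY|=R\}$. A secondary technical point is to make the uniform-in-$\tau$ moment and equicontinuity estimates rigorous so that the compactness extraction is justified.
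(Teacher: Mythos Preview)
Your approach is essentially the same as the paper's: Taylor expand $\varphi(\vecX^*)-\varphi(\vecX)$, use that the remainder is $O(\tau^2)$ so that after the $1/\tau$ prefactor it becomes $O(\tau)$, average over the Bernoulli variables to produce $\mathcal{K}$, and pass to the limit in the weak formulation. The paper's proof is in fact less detailed than yours: it simply writes the first-order term, notes the $O(\tau)$ error, and ``takes the limit as $\tau\to0$'' without discussing compactness or the bilinear passage at all. The two concerns you flag as the main obstacles---extracting a convergent subsequence and justifying the limit in the quadratic term---are precisely the points the paper leaves implicit, so your identification of them is accurate rather than a gap in your argument.
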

\begin{proof}
The proof follows the ideas of the one in \cite[Theorem 4.4]{AlbiFornasier16}, where now the binary dynamics is an affine function of the controls.

Since $\mu^\tau=\mu^\tau(\vecX,t)$ is a weak solution of \eqref{Boltzman1} with initial datum $\mu_0$\,, for every test function $\varphi\in C^\infty_c(\mathbb{R}^3)$ we have
\begin{equation}\label{weak_eq}
%\begin{aligned}
\frac{\mathrm{d}}{\mathrm{d} t} \int_{\mathbb{R}^3} \varphi(\vecX)\mu^\tau(\vecX,t)\,\mathrm{d}\vecX = \frac{1}{\tau}\bigg\langle\!\bigg\langle \int_{\mathbb{R}^3}\int_{\mathcal{S}_{R}(\vecX)}  \!\!\!\! (\varphi(\vecX^*)-\varphi(\vecX)) \mu^\tau(\vecX,t)\mu^\tau(\vecY,t)\,\mathrm{d}\vecY\mathrm{d}\vecX\bigg\rangle\!\bigg\rangle\,.
%\end{aligned}
\end{equation}
%Considering the interaction time $\Delta t$, this can be expressed with the following interaction kernel
%$$
%\frac{1}{\Delta t}\Big\langle\int_{\mathbb{R}^d}\int_{\mathbb{R}^d}(\varphi(x^{'})-\varphi(x))\mu_{\Delta t}(x)\mu_{\Delta t}(y)\,dx\,dy\Big\rangle
%$$
For $\tau$ small enough, we approximate, using Taylor expansion up to second order,
\begin{equation}\label{phi_diff}
\begin{split}
\varphi(\vecX^*)-\varphi(\vecX)= &\, \nabla\varphi(\vecX)\cdot(\vecX^*-\vecX) \\
&\, +\frac{1}{2} H\varphi(\vecX)(\vecX^*-\vecX)\cdot(\vecX^*-\vecX)+o(|\vecX^*-\vecX|^2)\,.
\end{split}
\end{equation}
The boundedness of $\vecu$ implies that of the control in the right-hand side of \eqref{Kinetic_inter}, so that the second-order term $|H\varphi(\vecX)(\vecX^*-\vecX)\cdot(\vecX^*-\vecX)|=O(\tau^2)$ becomes of order $\tau$ when plugging \eqref{phi_diff} in \eqref{weak_eq}.
%and can therefore be neglected in the limit $\tau\to0$.
Thus, we obtain
\begin{equation*}
\begin{split}
&\, \frac{\mathrm{d}}{\mathrm{d}t} \int_{\mathbb{R}^3}\varphi(\vecX)\mu^\tau(\vecX,t)\,\mathrm{d}\vecX \\
=&\, \bigg\langle\!\bigg\langle \int_{\mathbb{R}^3} \nabla\varphi(\vecX)\cdot\bigg(\int_{\mathcal{S}_{R}(\vecX)} \frac{\vecX^*-\vecX}{\tau} \mu^\tau(\vecY,t)\,\mathrm{d}\vecY\bigg)\mu^\tau(\vecX,t)\,\mathrm{d}\vecX\bigg\rangle\!\bigg\rangle +O(\tau)\\
=& \int_{\mathbb{R}^3} \nabla\varphi(\vecX)\cdot\bigg(\int_{\mathcal{S}_{R}(\vecX)} \!\! \big\langle\!\big\langle \Theta_{\vecX}\vecu+(1-\Theta_{\vecX})\Theta_{\vecY}\vecG(\vecY-\vecX)\vecu \big\rangle\!\big\rangle \mu^\tau(\vecY,t)\,\mathrm{d}\vecY\bigg)\mu^\tau(\vecX,t)\,\mathrm{d}\vecX\\
&\,+O(\tau)\\
=& \int_{\mathbb{R}^3} \nabla\varphi(\vecX)\cdot\bigg(\int_{\mathcal{S}_{R}(\vecX)} \mathcal{K}(\vecX,\vecY,\vecu) \mu^\tau(\vecY,t)\,\mathrm{d}\vecY\bigg)\mu^\tau(\vecX,t)\,\mathrm{d}\vecX +O(\tau)\\
=&\, -\int_{\mathbb{R}^3} \varphi(\vecX)\,\Div\bigg[\bigg(\int_{\mathcal{S}_{R}(\vecX)} \mathcal{K}(\vecX,\vecY,\vecu) \mu^\tau(\vecY,t)\,\mathrm{d}\vecY\bigg)\mu^\tau(\vecX,t)\bigg]\,\mathrm{d}\vecX +O(\tau)\,.
\end{split}
\end{equation*}
By taking the limit as $\tau\to0$, we get
$$\frac{\mathrm{d}}{\mathrm{d}t} \int_{\mathbb{R}^3}\varphi(\vecX)\mu(\vecX,t)\,\mathrm{d}\vecX= -\int_{\mathbb{R}^3} \varphi(\vecX)\,\Div\bigg[\bigg(\int_{\mathcal{S}_{R}(\vecX)} \mathcal{K}(\vecX,\vecY,\vecu) \mu(\vecY,t)\,\mathrm{d}\vecY\bigg)\mu(\vecX,t)\bigg]\,\mathrm{d}\vecX\,,$$
which is the weak form of \eqref{PDE}.
The theorem is proved.
% This equation turns out to be the weak formulation of
% \begin{equation}
% %\label{PDE}
% \partial_t \mu+\Div\Big[\Big(\int_{\mathbb{R}^3}\Big\langle K(\vecx,\vecy,\bar\vecu,\bar\vecu^*)\Big\rangle \mu(\vecy,t)\,d\vecy\Big)\mu\Big]=0.
% \end{equation}
% %Now calling 
% %\begin{equation}
% %\label{a_b}
% %\begin{aligned}
% %&a(x,x^*)=\frac{\vecz}{2a+||x-x^*||}+\frac{\vecz\cdot(x-x^*)}{(2a+||x-x^*||)^3}(x-x^*)\\
% %&b(x,x^*,\vecu^*)=\frac{\vecu^*}{2a+||x-x^*||}+\frac{\vecu^*\cdot(x-x^*)}{(2a+||x-x^*||)^3}(x-x^*)
% %\end{aligned}
% %\end{equation}
% %we have that
% %$$
% %\Big\langle K(x,y,\vecu(x),\vecu(y))\Big\rangle =\frac{4}{3}\vecz+(1-p)^2a(x,y)+p(1-p)b(x,y,\vecu(y))+\vecu(x) p
% %$$
\end{proof}
% \begin{remark}
% Note that splitting \eqref{PDE}, it reads also
% \begin{equation}
% \label{PDE2}
% %\begin{aligned}
% \partial_t \mu+
% p(1-p)\sum_{\ell=1}^du_{\ell}^{*}\Div\bigl[(G^{\ell}*\mu)\mu\bigr]+p\Div(\vecu \mu) =0.
% %\end{aligned}
% \end{equation}
% \end{remark}
%\begin{remark}
%In the example of active spheres immersed in a viscous fluid, controlling passive ones we have that
%$$
%\begin{aligned}
%\displaystyle K(\vecx,\vecy,\vecu,\vecu^*)&=\Big(\frac{4}{3}\vecz+\frac{\vecz}{2a+|x-y|}+\frac{\vecz\cdot(x-y)}{(2a+|x-y|)^3}(x-y)\Big)(1-\Theta)(1-\Theta^*)+\\\\
%&\left (\begin{pmatrix}
%\frac{2(x_1-y_1)^2+(x_2-y_2)^2+(x_3-y_3)_3^2}{(2a+|x-y|)^3}\\\\
%\frac{(x_1-y_1)(x_2-y_2)}{(2a+|x-y|)^3}\\\\
%\frac{(x_1-y_1)(x_3-y_3)}{(2a+|x-y|)^3}
%\end{pmatrix}\bar u^*_1+\begin{pmatrix}
%\frac{(x_1-y_1)(x_2-y_2)}{(2a+|x-y)|)^3}\\\\
%\frac{(x_1-y_1)^2+2(x_2-y_2)^2+(x_3-y_3)_3^2}{(2a+|x-y|)^3}\\\\
%\frac{(x_2-y_2)(x_3-y_3)}{(2a+|x-y|)^3}
%\end{pmatrix}\bar u^*_2\right .\\
%&+\left .\begin{pmatrix}
%\frac{(x_1-y_1)(x_3-y)_3}{(2a+|x-y|)^3}\\\\
%\frac{(x_2-y_2)(x_3-y_3)}{(2a+|x-y|)^3}\\\\
%\frac{(x_1-y_1)^2+(x_2-y_2)^2+2(x_3-y_3)_3^2}{(2a+|x-y|)^3}
%\end{pmatrix}\bar u^*_3\right)(1-\Theta)\Theta^*+\begin{pmatrix}\bar u_1\\\bar u_2\\\bar u_3\end{pmatrix}\Theta\,.
%\end{aligned}
%$$
%\end{remark}

\subsection{The optimal control problem: binary control and Boltzmann approach}
In this section, starting from a finite-dimensional optimal control problem for the case of two particles interacting according to \eqref{Kinetic_inter_main} (in particular, see the scenario studied in Section~\ref{1on1}), we derive the infinite-dimensional one, via the Boltzmann approach introduced above. 
We will give a sub-optimal solution to the infinite-dimensional optimal control problem starting from the optimal solution of the finite-dimensional one.
\subsubsection{The binary optimal control}
%\subsection{Binary sparse control and Boltzmann approach}
Let us consider two particles $\vecx,\vecy\in\mathbb{R}^3$ that interact according to the dynamics described in \eqref{Kinetic_inter_main}, where the control $\vecu$ takes values in a compact set $\mathcal{U}\subset\mathbb{R}^{3}$. 
We denote by $\vecz\coloneqq(\vecx,\vecy)\in\mathbb{R}^6$ the state of the system and we are interested in minimizing, for $\vecu\in L^\infty([0,T],\mathcal{U})$, the following finite-horizon functional
\begin{equation}\label{opt_finite_1}
\mathcal{J}(\vecu,\vecz_0)\coloneqq \int_0^T\mathscr{L}_{\bar\vecz}(\vecz(t),\vecu(t))\,\mathrm{d}t\,.
\end{equation}
In \eqref{opt_finite_1}, the cost function is
\begin{equation}\label{ellecorsivo}
\mathscr{L}_{\bar\vecz}(\vecz,\vecu)=\frac{1}{2}\lvert\bar{\vecz}-\vecz\rvert^2+\frac{\gamma}{2}\lvert\vecu\rvert^2\,,
\end{equation}
with $\bar\vecz$ a desired target state and $\gamma>0$ a weight, measuring the relative importance of the magnitude of the control. 
The point $\vecz_0=(\vecx_0,\vecy_0)\in\mathbb{R}^6$ contains the initial conditions.

For $K\in\mathbb{N}$, we discretize the time interval $[0,T]$ by dividing it into $K$ subintervals $[t_{k-1},t_{k}]$, for $k=1,\ldots,K$, of length $1/K$, with $t_0=0$ and $t_{K}=T$.
Furthermore, we assume that the control $\vecu$ can be considered constant in each subinterval, that is $\vecu(0)=\vecu_0^K$ and $\vecu(t)=\vecu_k^K$ for $t\in(t_{k-1},t_{k}]$. We can also represent this control as $\vecu(t)=\sum_{k=1}^{K} \vecu_k^K\chi_{(t_{k-1},t_{k}]}(t)$.
The discretization of the dynamics \eqref{Kinetic_inter_main} is
\begin{equation}\label{1.25}
\begin{cases}
\vecx_{k}=\vecx_{k-1}+K^{-1}[\Theta_{\vecx}\mathbb{I}+(1-\Theta_{\vecx})\Theta_{\vecy}\vecG(\vecx_{k-1}-\vecy_{k-1})]\vecu_{k-1}^K\,,
%\frac{\Delta t}{2}\displaystyle{\sum_{\ell=1}^3} \vecG^\ell(\vecx^k_i-\vecx^k_j)u_{j\,\ell}^{k}+\vecu_i^k\Theta_i
\\
\vecy_{k}=\vecy_{k-1}+K^{-1}[\Theta_{\vecy}\mathbb{I}+(1-\Theta_{\vecy})\Theta_{\vecx}\vecG(\vecy_{k-1}-\vecx_{k-1})]\vecu_{k-1}^K\,,
\end{cases}
\end{equation}
%where $\vecG^\ell$ are the regularized vector fields in \eqref{eq_i_sphere2_approx}, $\Theta_i$ is a Bernoulli random variable which describes the probability that the agent $i$ is controlled and we have assumed that the control is piecewise constant in each time interval $[t_k,t_{k+1}]$, i.e $\vecu_i(t)=\sum_{k=1}^{N_T}\vecu_i^k\chi_{[t_k,t_{k+1}]}$.\\
and, accordingly, we put $\vecz_k\coloneqq(\vecx_k,\vecy_k)$, for all $k=1,\ldots,K$.
We now look at the discretized functional (for $\vecu^K\coloneqq\{\vecu_0^K,\ldots,\vecu_K^K\}$ the discretization of the control $\vecu$)
\begin{equation*}
%\min_{\vecu_{ij}\in L^\infty([0,T],\mathcal{U})}
\mathcal{J}_{K}(\vecu^K,\vecz_0\big)\coloneqq %\min_{\vecu_{ij}\in L^\infty([0,T],\mathcal{U})}
K^{-1}\sum_{k=1}^{K}\mathscr{L}_{\bar\vecz}(\vecz_{k},\vecu_{k-1}^K)\,.
\end{equation*}
The strategy is to optimize the values $\vecu^K$ in each time interval by solving an instantaneous optimal control problem. 
For example, in the time interval $[0,t_1]$
%The instantaneous control corresponds to the shortest nontrivial prediction horizon, i.e. $N_T=1$.
%In order to simplify calculation let us suppose at first that our control functions are $\vecu_i=(0,0,u^3_i)$ and 
% $\vecu_j=(0,0,u^3_j)$.
the instantaneous control problem is further simplified to minimizing the quantity
\begin{equation}\label{J_discr}
K^{-1}\mathscr{L}_{\bar\vecz}(\vecz_1,\vecu_0^K)=\frac1{K}\bigg(\frac{1}{2}\lvert\bar{\vecz}-\vecz_{1}\rvert^2+\frac{\gamma}{2}\lvert\vecu_0^K\rvert^2\bigg)\,,
\end{equation}
with $\vecz_{1}$ given by \eqref{1.25} for $k=1$, which reads
\begin{equation*}%\label{discrete_evol}
%\footnotesize
\begin{cases}
\vecx_1=\vecx_0+K^{-1}[\Theta_{\vecx}\mathbb{I}+(1-\Theta_{\vecx})\Theta_{\vecy}\vecG(\vecx_0-\vecy_0)]\vecu_0^K \eqqcolon \vecx_0+K^{-1}\vecA_{\vecx}\vecu_0^K\,,\\
\vecy_1=\vecy_0+K^{-1}[\Theta_{\vecy}\mathbb{I}+(1-\Theta_{\vecy})\Theta_{\vecx}\vecG(\vecy_0-\vecx_0)]\vecu_0^K \eqqcolon \vecy_0+K^{-1}\vecA_{\vecy}\vecu_0^K\,.
\end{cases}
\end{equation*}
Since $\vecz_{1}$ depends linearly on $\vecu_0^K$\,, the expression in \eqref{J_discr} is convex in $\vecu_0^K$ and the optimal control $\bar\vecu_0^K$ is obtained by stationarising it with respect to $\vecu_0^K$. 
A straightforward computation leads to
\begin{equation}\label{Dk}
\underbrace{\big( K^{-1}(\vecA_{\vecx}^2+\vecA_{\vecy}^2)+\gamma K\mathbb{I} \big)}_{\eqqcolon \vecD^K(\vecx_0,\vecy_0)}\vecu_0^K = \underbrace{\vecA_{\vecx}(\bar\vecx-\vecx_0)+\vecA_{\vecy}(\bar\vecy-\vecy_0)}_{\eqqcolon \vecC(\vecx_0,\vecy_0)}.
%\vecD_{K} \vecu_0^K=\vecC_{K}\,,%+\gamma\begin{pmatrix}
%\mathrm{sign} (u^3_i)\\\mathrm{sign}( u^3_j )
%\end{pmatrix}
\end{equation}
Since $\vecA_{\vecx}$ and $\vecA_{\vecy}$ are bounded matrices, for $K$ sufficiently large, the matrix $\vecD^K(\vecx_0,\vecy_0)$ is invertible, so that we can define
\begin{equation}\label{opt_inst_control}
\bar\vecu_0^K= \bar\vecu_0^K(\vecx_0,\vecy_0)\coloneqq \Pi_{\mathcal{U}} [(\vecD^K(\vecx_0,\vecy_0))^{-1}\vecC(\vecx_0,\vecy_0)],
\end{equation}
where $\Pi_\mathcal{U}$ is the projection on the compact $\mathcal{U}$. By iterating this procedure in each time interval, we generate an optimal control in feedback form 
\begin{equation}\label{numeretto}
\bar\vecu^K\coloneqq \big\{\bar\vecu_0^K(\vecx_0,\vecy_0), \ldots,\bar\vecu_K^K(\vecx_K,\vecy_K)\big\}\,.
\end{equation}
%, i.e., at a given discrete instant $k$, the instantaneous optimal action is a non linear mapping only depending on the current state $\vecx^k$ and model parameters.
\begin{remark}
From \eqref{Dk} and \eqref{opt_inst_control} it is emerges that the piecewise constant optimal control $\bar\vecu^K$ is, in each time interval $(t_{k-1},t_{k}]$, of order $O(1/K)$. 
When plugging it into \eqref{1.25}, we obtain that $\lvert \vecz_{k}-\vecz_{k-1}\rvert = O(1/K^2)$, which makes the first-order contribution in \eqref{phi_diff} of order $O(1/K^2)$.
If, ideally, we take $1/K=\tau$, then such a contribution would be neglected too in the proof of Theorem~\ref{K_limit}.
A way to prevent this from happening is to make the weight $\gamma$ scale with $K$ as well, for instance by requiring that $\gamma=\bar\gamma/K$.

By doing so, the matrix $\vecD^K$ in \eqref{Dk} becomes $K^{-1}(\vecA_{\vecx}^2+\vecA_{\vecy}^2)+\bar\gamma\mathbb{I}$, which, for $K$ large, is a perturbation (of order $1/K$) of $\bar\gamma\mathbb{I}$.
Thus, $(\vecD^K)^{-1}\vecC$ is of order $1$, and so are the optimal controls $\{\bar\vecu_k^K\}_{k=1}^K$ in \eqref{opt_inst_control}.
Going back to \eqref{1.25}, we now obtain that $\lvert \vecz_{k}-\vecz_{k-1}\rvert = O(1/K)$ and the proof of Theorem~\ref{K_limit} can be carried out as we presented it, provided that the controls $\bar\vecu^K$ converge to some $\bar\vecu\in L^\infty([0,T];\mathcal{U})$. 
% In order to obtain an effective contribution of the control in the dynamics we
% assume that the penalization parameter $\gamma$ scales as $\frac{1}{K}$, i.e. $\gamma=\frac{1}{K}\bar\gamma$.
\end{remark}
\subsubsection{The Boltzmann approach for the infinite dimensional optimal control}
For a large ensemble of particles, the microscopic optimal control problem is well approximated by the following mean-field optimal control problem
\begin{equation}\label{this_problem}
\begin{cases}
\min\big\{J(\mu,\vecu): \vecu\in L^\infty([0,T];\mathcal{U}) \big\},
\;\; \text{with $\mu$ subject to}\\[2mm]
\displaystyle \partial_t \mu+\Div\bigg[\bigg(\int_{\mathcal{S}_R(\vecX)} \mathcal{K}(\vecX,\vecY,\vecu) \mu(\vecY,t)\,\mathrm{d} \vecY \bigg)\mu\bigg]=0,
\end{cases}
\end{equation}
where the functional $J$ is defined by
\begin{equation*}
%\min_{\vecu\in L^\infty([0,T],\mathcal{U})}
J(\mu,\vecu)\coloneqq %\min_{\vecu\in L^\infty([0,T],\mathcal{U})}
\int_0^T\int_{\mathbb{R}^3}%e^{-\lambda t}
\mathscr{L}_{\bar\vecX}(\vecX,\vecu(\vecX,t))\mu(\vecX,t)\,\mathrm{d}\vecX\mathrm{d}t\,,
\end{equation*}
with $\mathcal{K}$ given in \eqref{K} and $\mathscr{L}$ is given in \eqref{ellecorsivo}.
% $$
% \Big\langle K(\vecx,\vecy,\vecu,\vecu^*)\Big\rangle =p(1-p)\sum_{\ell=1}^3 G^\ell(\vecx,\vecy)u^{*}_{\ell}+\vecu p\,,
% $$
%where $p$ is the probability that the particle $\vecx$ or $\vecy$ is controlled. % and the functions $a(X,Y)$ and $b(X,Y,\vecu(Y))$ given in \eqref{a_b}. 
%  The mean-field functional $J(\mu,\vecu)$ is defined accordingly to the finite dimensional cost $\mathscr{L}$ in this way:
% \begin{equation}
% \label{opt_infinite_1}
% J(\mu,\vecu):=\int_0^T\Bigg(\int_{\mathbb{R}^3}\lvert\bar{\vecX}-\vecX\rvert^2\mu(\vecX,t)\,d\vecX+\int_{\mathbb{R}^3}\gamma\lvert\vecu\rvert^2\mu(X,t)\,d\vecX\Bigg)\,dt\,,
% \end{equation}
% where $\bar\vecX$ is the desired fixed configuration.\\

We propose a sub-optimal solution to the problem in \eqref{this_problem} by using a Boltzmann-type equation to model the evolution of a system of particles governed by binary interactions.
%For $\mu=\mu(\vecX,t)$ denoting the kinetic probability density of agents in position $\vecX\in\mathbb{R}^3$ at time $t>0$, t
The time evolution of the density  $\mu$ is given as a balance between gains and losses of particles, due to the following constrained binary interaction
\begin{equation}
\label{opt_feedback_dyn}
%\small
\begin{cases}
\vecX^*=\vecX+K^{-1}[\Theta_{\vecX}\mathbb{I}+(1-\Theta_{\vecX})\Theta_{\vecY}\vecG(\vecX-\vecY)]\bar\vecu^K\,,\\
\vecY^*=\vecY+K^{-1}[\Theta_{\vecY}\mathbb{I}+(1-\Theta_{\vecY})\Theta_{\vecX}\vecG(\vecY-\vecX)]\bar\vecu^K\,,
\end{cases}
\end{equation}
where $(\vecX^*,\vecY^*)$ are the post-interaction states, the parameter $1/K$ measures the strength of the interaction, and the optimal control $\bar\vecu^K=\bar\vecu^K(\vecX,\vecY)$ for the discretized dynamics (see \eqref{numeretto}) is now used as the control in \eqref{opt_feedback_dyn}.

%For $\alpha=\frac{\Delta t}{2}$ and $\vecu^{i*}_\alpha(\vecx^1_i,\vecx^1_j)$ given by \eqref{opt_inst_control} the resulting dynamics is equivalent to the expression \eqref{discrete_evol}.\\
We now proceed similarly to Theorem \ref{K_limit}, making the quasi-invariant limit by supposing that the density $\mu$ satisfies a Boltzmann-type equation and the particle dynamic is the one in \eqref{opt_feedback_dyn}. % considering a regime where interaction strength is low and frequency high.
%More precisely we consider that  the density $\mu$ satisfies the Boltzmann type equation \eqref{Boltzman1}
%\begin{equation}
%\label{Boltzmann}
%\partial_t\mu=Q_\alpha(\mu,\mu)(X,t)
%\end{equation}
%with the interaction operator $\mathbb{Q}_\alpha$ given by \eqref{interaction_kernel}
%$$
%Q_\alpha(\mu,\mu)(X)=\eta\int_{\mathbb{R}^3}\Bigg(\frac{1}{\mathbb{J}_\alpha}\mu(X^*)\mu(Y^*)-\mu(X)\mu(Y)\Bigg)\,dy
%$$
%where $\mathbb{J}_\alpha$ represents the Jacobian of the transformation $(\vecx,\vecy)\to(\vecx^*,\vecy^*)$ given in \eqref{opt_feedback_dyn}.
\begin{theorem}
Assume that $\bar\vecu^K\to \bar\vecu\in L^\infty([0,T];\mathcal{U})$ pointwise as $K\to\infty$ and let $\mu_0\in\mathcal{P}_1(\mathbb{R}^3)$; let $\mu^K$ be a weak solution to the Boltzmann equation \eqref{Boltzman1} with $\sigma=K$, according to Definition~\ref{def_weaksolBoltz} (suitably adapted on $[0,T]$).
Then for $K\to\infty$, the densities of particles $\{\mu^K\}_{K}$ converge pointwise, up to a subsequence, to a measure $\mu\in L^2(0,T;\mathcal{P}_1(\mathbb{R}^3))$ that satisfies
$$%\begin{equation}\label{PDE}
\partial_t \mu+\Div\bigg[\bigg(\int_{\mathcal{S}_R(\vecX)}
%\big\langle\!\big\langle 
\mathcal{K}(\vecX,\vecY,\bar\vecu)
%\big\rangle\!\big\rangle 
\mu(\vecY,t)\,\mathrm{d}\vecY\bigg)\mu\bigg]=0
$$%\end{equation}
%\marginpar{\textcolor{red}{\footnotesize{ $\vecu$ dipende o no da x? Se no, come faccio a distinguere $\vecu$ da $\vecu^*$?}}}
with initial datum $\mu_0$\,, where $\mathcal{K}$ is defined in \eqref{K}.
% \begin{equation}\label{K}
% %\big\langle\!\big\langle 
% \mathcal{K}(\vecX,\vecY,\vecu)
% %\big\rangle\!\big\rangle 
% =p(1-p)\vecG(\vecY-\vecX) \vecu+p\vecu\,.
% \end{equation}
% Then we consider a weak solution $\mu$ of the Boltzmann-type equation \eqref{Boltzman1} with initial datum $\mu_0(\vecx)$. Setting $\alpha=\epsilon$ and $\eta=\frac{1}{\epsilon}$ for the binary interaction \eqref{opt_feedback_dyn}, and defining $\mu^\epsilon(\vecx,t)$ a solution for the scaled equation \eqref{Boltzman1}, when $\epsilon\to 0$, $\mu^\epsilon(\vecx,t)$ converges point-wise, up to a subsequence, to $\mu(\vecx,t)$ satisfying the following nonlinear equation
% \begin{equation}
% \label{PDE3}
% \partial_t \mu+\Div\Big[\Big(\int_{\mathbb{R}^d}\Big\langle K(\vecx,\vecy,K^1(
% \vecx,
% \vecy),K^2(\vecx,\vecy))\Big\rangle \mu(\vecy,t)\,dY
% \vecy\Big)\mu\Big]=0\,,
% \end{equation}
% with 
% $$
% \begin{aligned}
% K(\vecx,\vecy,K^1(\vecx,\vecy),&K^2(\vecx,\vecy))=\sum_{\ell=1}^3G^\ell(\vecx-\vecy)K^{2}_{\ell}(\vecx,\vecy)(1-\Theta_\vecx)\Theta_\vecy+K^1(\vecx,\vecy)\Theta_\vecx \,.
% \end{aligned}
% $$
\end{theorem}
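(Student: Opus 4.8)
The plan is to reproduce the argument of Theorem~\ref{K_limit} almost verbatim under the dictionary $\tau \leftrightarrow 1/K$, so that the balance $\sigma\tau = K\cdot K^{-1}=1$ is preserved, and to isolate the single new feature: the control $\bar\vecu^K=\bar\vecu^K(\vecX,\vecY)$ now depends on the positions through the feedback form \eqref{numeretto} and on the parameter $K$. First I would write the weak form of \eqref{Boltzman1} with $\sigma=K$ for the binary dynamics \eqref{opt_feedback_dyn}:
\[
\frac{\mathrm{d}}{\mathrm{d}t}\int_{\mathbb{R}^3}\varphi(\vecX)\mu^K(\vecX,t)\,\mathrm{d}\vecX = K\bigg\langle\!\bigg\langle\int_{\mathbb{R}^3}\int_{\mathcal{S}_R(\vecX)}(\varphi(\vecX^*)-\varphi(\vecX))\mu^K(\vecX,t)\mu^K(\vecY,t)\,\mathrm{d}\vecY\,\mathrm{d}\vecX\bigg\rangle\!\bigg\rangle,
\]
for every $\varphi\in C_c^\infty(\mathbb{R}^3)$.

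Next I would Taylor-expand $\varphi(\vecX^*)-\varphi(\vecX)$ exactly as in \eqref{phi_diff}. From \eqref{opt_feedback_dyn} one has $\vecX^*-\vecX = K^{-1}[\Theta_{\vecX}\mathbb{I}+(1-\Theta_{\vecX})\Theta_{\vecY}\vecG(\vecX-\vecY)]\bar\vecu^K$; since $\bar\vecu^K$ takes values in the compact set $\mathcal{U}$ (it is a projection $\Pi_{\mathcal{U}}$, see \eqref{opt_inst_control}) and $\vecG$ is bounded on $\mathcal{S}_R(\vecX)$ (where $|\vecX-\vecY|>R$), we get $|\vecX^*-\vecX|=O(1/K)$ uniformly, so the second-order term is $O(K^{-2})$ and, after multiplication by $\sigma=K$, contributes an $O(1/K)$ remainder. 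For the first-order term, multiplying by $K$ and averaging over the independent Bernoulli variables $\Theta_{\vecX},\Theta_{\vecY}$ (using $\langle\!\langle\Theta_{\vecX}\rangle\!\rangle=p$, $\langle\!\langle(1-\Theta_{\vecX})\Theta_{\vecY}\rangle\!\rangle=p(1-p)$, and that $\vecG$ is even so $\vecG(\vecX-\vecY)=\vecG(\vecY-\vecX)$) reproduces precisely the kernel $\mathcal{K}(\vecX,\vecY,\bar\vecu^K)$ of \eqref{K}. This reduces the identity to
\[
\frac{\mathrm{d}}{\mathrm{d}t}\int_{\mathbb{R}^3}\varphi(\vecX)\mu^K\,\mathrm{d}\vecX = \int_{\mathbb{R}^3}\nabla\varphi(\vecX)\cdot\bigg(\int_{\mathcal{S}_R(\vecX)}\mathcal{K}(\vecX,\vecY,\bar\vecu^K)\mu^K(\vecY,t)\,\mathrm{d}\vecY\bigg)\mu^K(\vecX,t)\,\mathrm{d}\vecX + O(1/K).
\]

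The decisive step, which I expect to be the main obstacle, is the passage to the limit $K\to\infty$. I would invoke the same compactness used in Theorem~\ref{K_limit} and \cite[Theorem 4.4]{AlbiFornasier16} to extract a subsequence with $\mu^K\to\mu$ pointwise, and combine it with the assumed pointwise convergence $\bar\vecu^K\to\bar\vecu$. The crucial simplification is that $\mathcal{K}(\vecX,\vecY,\cdot)$ is \emph{affine} in the control and that the controls remain in the compact set $\mathcal{U}$ while $\vecG$ is bounded on $\mathcal{S}_R(\vecX)$; hence $\mathcal{K}(\vecX,\vecY,\bar\vecu^K)\to\mathcal{K}(\vecX,\vecY,\bar\vecu)$ with a uniform bound, and dominated convergence lets me pass to the limit in the triple product $\mathcal{K}(\vecX,\vecY,\bar\vecu^K)\mu^K(\vecY,t)\mu^K(\vecX,t)$. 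Finally, integrating by parts in $\vecX$ to move $\nabla\varphi$ onto the flux yields the weak form of the stated PDE with kernel $\mathcal{K}(\vecX,\vecY,\bar\vecu)$, and the initial datum $\mu_0$ is recovered as in Definition~\ref{def_weaksolBoltz}. The genuinely new difficulty relative to Theorem~\ref{K_limit} is exactly this joint limit in the measure and the $K$-dependent feedback control; the affine dependence of $\mathcal{K}$ on $\vecu$, together with the compactness of $\mathcal{U}$, is what keeps the passage routine.
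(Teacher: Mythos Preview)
Your proposal is correct and follows exactly the approach the paper indicates: the paper's own proof consists only of the sentence ``The proof is similar to that of Theorem~\ref{K_limit} and is left to the reader,'' so you have in fact supplied the details the authors omit. Your identification of the one genuinely new point---the joint limit in $\mu^K$ and the $K$-dependent feedback $\bar\vecu^K$, handled via the affine dependence of $\mathcal{K}$ on $\vecu$ and the compactness of $\mathcal{U}$---is precisely the content that distinguishes this statement from Theorem~\ref{K_limit}.
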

\begin{proof}
The proof is similar to that of Theorem \ref{K_limit} and is left to the reader. %in Section \ref{sec:kinetic} and follows the one in \cite{AlbiFornasier16}
\end{proof}

% \noindent This is the sub-optimal solution to the infinite-dimensional optimal control problem,
% %In the case of the spheres immersed in a Stokes fluid, this approach can be applied, finding at first the optimal control in the case in which there are only two spheres and then applying the Boltzman approach to find the suboptimal solution for the infinite dimensional case. 
% which steers the whole system with infinite spheres towards a desired configuration minimizing also the energy spent by the controls, starting from the binary optimal control problem solution.

%------
% Insert acknowledgments and information
% regarding funding at the end of the last
% section, i.e., right before the bibliography.
%------
% \section*{Appendix}
% \label{Appendix}
% Deriving the discretized functional $\mathcal{J}_K$ in \eqref{J_discr} we have that the matrices $\vecD_K$ and $\vecC_K$ in \eqref{Dk} are
% $$
% \begin{aligned}
% &\vecD_K=\gamma\mathbb{I}-\frac{1}{K^2}\bigg((\Theta_{\vecx}\mathbb{I}+(1-\Theta_{\vecx})\Theta_{\vecy}\vecG(\vecx_0-\vecy_0))^2+(\Theta_{\vecy}\mathbb{I}+(1-\Theta_{\vecy})\Theta_{\vecx}\vecG(\vecy_0-\vecx_0))^2\bigg)\\
% &\vecC_K=\frac{1}{K}\bigg((\Theta_{\vecx}\mathbb{I}+(1-\Theta_{\vecx})\Theta_{\vecy}\vecG(\vecx_0-\vecy_0))(\bar\vecx-\vecx_0)+(\Theta_{\vecy}\mathbb{I}+(1-\Theta_{\vecy})\Theta_{\vecx}\vecG(\vecy_0-\vecx_0))(\bar\vecy-\vecy_0)\bigg)
% \end{aligned}
% $$

\noindent\textbf{Acknowledgments}\\
MZ is a member of INdAM-GNFM; MM is a member of INdAM-GNAMPA.\\\\
%\end{ack}

\noindent\textbf{Funding}\\
HS acknowledges the support of the Natural Sciences and Engineering Research Council of Canada (NSERC), [funding reference no. RGPIN-2018-04418]. Cette recherche a \'{e}t\'{e} financ\'{e}e par le Conseil de recherches en sciences naturelles et en g\'{e}nie du Canada (CRSNG), [num\'{e}ro de r\'{e}f\'{e}rence RGPIN-2018-04418].
Funding from the \emph{Mathematics for Industry 4.0} 2020F3NCPX PRIN2020 (MM and MZ) funded by the Italian MUR,  the  \emph{Geometric-Analytic Methods for PDEs and Applications} 2022SLTHCE (MM) and the \emph{Innovative multiscale approaches, possibly based on Fractional Calculus, for the effective constitutive modeling of cell mechanics, engineered tissues, and metamaterials in Biomedicine and related fields} P2022KHFNB (MZ) projects funded by the European Union -- Next Generation EU  within the PRIN 2022 PNRR program (D.D. 104 - 02/02/2022) is gratefully acknowledged. 
This manuscript reflects only the authors’ views and opinions and the Ministry cannot be considered responsible for them.
%\end{funding}

%------
% Insert the bibliography.
%------

\end{document}